\pgfplotsset{compat=1.18}
\definecolor{uuuuuu}{rgb}{0.27,0.27,0.27}
\definecolor{sqsqsq}{rgb}{0.1255,0.1255,0.1255}
\newtheorem{definition}{Definition} [section]
\newtheorem{theorem}[definition]{Theorem}
\newtheorem{lemma}[definition]{Lemma}
\newtheorem{proposition}[definition]{Proposition}
\newtheorem{claim}[definition]{Claim}
\newtheorem{fact}[definition]{Fact}
\begin{document}
\title{\bf\Large Many vertex-disjoint even cycles of fixed length in a graph}
\date{\today}
\author[1]{Jianfeng Hou\thanks{Research was supported by National Natural Science Foundation of China (Grant No. 12071077). Email: \texttt{jfhou@fzu.edu.cn}}}
\author[1]{Caiyun Hu\thanks{Email: \texttt{hucaiyun.fzu@gmail.com}}}
\author[2]{Heng Li\thanks{Email: \texttt{heng.li@sdu.edu.cn}}}
\author[3]{Xizhi Liu\thanks{Research was supported by ERC Advanced Grant 101020255 and Leverhulme Research Project Grant RPG-2018-424. Email: \texttt{xizhi.liu.ac@gmail.com}}}
\author[1]{Caihong Yang\thanks{Email: \texttt{chyang.fzu@gmail.com}}}
\author[1]{Yixiao Zhang\thanks{Email: \texttt{fzuzyx@gmail.com}}}
\affil[1]{Center for Discrete Mathematics,
            Fuzhou University, Fujian, 350003, China}
\affil[2]{School of Mathematics, Shandong University, 
            Shandong, 250100, China}
\affil[3]{Mathematics Institute and DIMAP,
            University of Warwick, 
            Coventry, CV4 7AL, UK}
\maketitle
\begin{abstract}
For every integer $k \ge 3$, 
we determine the extremal structure of an $n$-vertex graph with at most $t$ vertex-disjoint copies of $C_{2k}$ when $n$ is sufficiently large and $t$ lies in the interval $\left[\frac{\mathrm{ex}(n,C_{2k})}{\varepsilon n}, \varepsilon n\right]$, where $\varepsilon>0$ is a constant depending only on $k$. 
The question for $k = 2$ and $t = o\left(\frac{\mathrm{ex}(n,C_{2k})}{n}\right)$ was explored in prior work~\cite{HHLLYZ23a}, 
revealing different extremal structures in these cases. 
Our result can be viewed as an extension of the theorems by Egawa~\cite{Ega96} and Verstra\"{e}te~\cite{Ver03}, where the focus was on the existence of many vertex-disjoint cycles of the same length without any length constraints.

\medskip

\textbf{Keywords:} even cycles, the Bondy--Simonovits Theorem, Tur\'{a}n problems, the Corr\'{a}di--Hajnal Theorem. 
\end{abstract}
\section{Intorduction}\label{SEC:Intorduction}
The classical Corr\'{a}di--Hajnal Theorem~\cite{CH63} asserts that for every $t \ge 2$, a graph with minimum degree at least $2t$ on at least $3t$ vertices contains $t$ pairwise vertex-disjoint cycles. 
Thomassen~\cite{Tho83} extends this to graphs with minimum degree at least $3t+1$ on at least $t^{2+o(1)}$ vertices, showing they contain $t$ pairwise vertex-disjoint cycles of the same length.
Thomassen's result was further extended by
Egawa~\cite{Ega96} to graphs with minimum degree at least $2t$ on at least $(17+o(1))t$ vertices when $t \ge 3$. 
In~\cite{Ver03}, Verstra\"{e}te provided a concise proof, showing that for every $t \ge 2$, a graph with minimum degree at least $2t$ on at least $t^{7t}$ vertices also contains $t$ pairwise vertex-disjoint cycles of the same length.
In particular, the case $t=2$ confirms a conjecture of H\"{a}ggkvist~\cite{Tho83}.

There are some other extensions of the Corr\'{a}di--Hajnal Theorem from the average degree perspective or by imposing different constraints on the length of cycles. 
Notable instances include the work of Justesen~\cite{Jus85}, who demonstrated that graphs with $n \ge 3t$ vertices and at least $\max\left\{(2t-1)(n-t), \binom{3t-1}{2}+(n-3t+1)\right\}$ edges contains $t$ pairwise vertex-disjoint cycles.
Chiba et al.~\cite{CFKS14} showed that every graph with minimum degree  at least $2t$ and with sufficiently large number of vertices contains $t$ pairwise vertex-disjoint even cycles except in a few exceptional cases. 
Wang~\cite{Wang12} proved that for $t \ge 2$, every graph with minimum degree at least $2t$ and at least $4t$ vertices contains $t$ pairwise vertex-disjoint cycles of length at least four, except in three exceptional cases. 
Harvey and Wood~\cite{HW15} extended this to $t \ge 6$ and $\ell \ge 3$, demonstrating that every graph with average degree at least $4\ell t/3$ contains $t$ pairwise vertex-disjoint cycles of length at least $r$. 

The problem of finding cycles of a fixed length in a graph also attracted considerable attention and stands as a central topic in Extremal Graph Theory. 
An old theorem due to Mantel~\cite{Mantel07} states that every $n$-vertex graph with at least $\lfloor n^2/4 \rfloor + 1$ edges contains a triangle. 
For every $k \ge 2$, using the Stability method, Simonovits~\cite{Sim66} showed that every $n$-vertex graph with at least $\lfloor n^2/4 \rfloor + 1$ edges contains a copy of $C_{2k+1}$ when $n$ is sufficiently large.
The tightness of the bound $\lfloor n^2/4 \rfloor + 1$ is demonstrated by the balanced complete bipartite graph on $n$ vertices in both cases.
Finding even cycles of fixed length is much more challenging. 
Erd{\H o}s stated (without a proof) in~\cite{Erd64} that for every $k \ge 2$, every $n$-vertex graph with at least $A_k n^{1+\frac{1}{k}}$ edges contains a copy of $C_{2k}$, where $A_k$ is a constant depending on $k$. 
The classical Bondy-Simonovits Theorem~\cite{BS74} strengthens this by showing the existence of $C_{2\ell}$ in an $n$-vertex graph with at least $100k n^{1+\frac{1}{k}}$ edges for all integers $\ell \in [k, kn^{1/k}]$.  
Subsequent works~\cite{Ver00,Oleg12,BJ17,BJ17b,He21} have refined this bound. 
Determining whether the exponent $1+ 1/k$ is indeed tight remains a major open problem in Extremal Graph Theory. 
Currently, tightness is only known for $k\in \{2,3,5\}$, and we refer the reader to~\cite{ER62,ERS66,Brown66,Ben66,Fur83,Wen91,Fur94,LUW94,LUW95,LU95,Fur96,LUW99,Mel04,MM05,FNV06} for more related results.  

In this note, we explore the following problem, which can be seen as a collective extension of the results by Egawa, Verstra\"{e}te, and Bondy--Simonovits: 
\begin{center}
    What is the maximum number of edges in an $n$-vertex graph without $t+1$ pairwise vertex-disjoint cycles of a fixed length?
\end{center}
This study is motivated by a remarkable result of Allen et al.~\cite{ABHP15}, where they determined, for large $n$, both the maximum number of edges  and the structures of four distinct extremal constructions (corresponding to different values of $t$) in an $n$-vertex graph without $t+1$ pairwise vertex-disjoint triangles for all integers $t$ in the interval $[0, n/3]$. 
Their result builds upon old theorems established by of Erd{\H o}s~\cite{Erdos62}, Moon~\cite{Moon68}, and Simonovits~\cite{SI68}, which specifically address this problem in cases where $t$ is relatively small compared to $n$. 

Let us introduce some definitions before presenting the main result. 
Given a graph $F$, let $\mathrm{ex}(n,F)$ denote the maximum number of edges in an $n$-vertex $F$-free graph. 
We expand this notation to $\mathrm{ex}(n,(t+1)F)$, representing the maximum number of edges in an $n$-vertex graph containing at most $t$ pairwise vertex-disjoint copies of $F$.

The results of Mantel and Simonovits can be expressed as $\mathrm{ex}(n,C_{2k+1}) = \lfloor n^2/4 \rfloor$ for $k \ge 1$ when $n$ is sufficiently large. 
Similarly, the results of Erd{\H o}s and Bondy--Simonovits can be written as $\mathrm{ex}(n,C_{2k}) \le 100 k n^{1+\frac{1}{k}}$ for all $k \ge 2$. 
The exact value of $\mathrm{ex}(n,(t+1)C_{2k+1})$ was established in~\cite{HLLYZ23} for large $n$ and $t = o(n)$. 
Additionally, the exact value of $\mathrm{ex}(n,(t+1)C_{2k})$ was determined in~\cite{HHLLYZ23a} for large $n$, with $t$ satisfying $t = o\left(\frac{\mathrm{ex}(n,C_{2k})}{n}\right)$ or $\sqrt{n} \ll t \ll n$.
The following result builds upon the previous one, determining the exact value of $\mathrm{ex}(n,(t+1)C_{2k})$ for $\frac{\mathrm{ex}(n,C_{2k})}{n} \ll t \ll n$.

\begin{theorem}\label{THM:2nd-C2k-main}
    For every integer $k \ge 3$ there exists $\varepsilon_k > 0$ such that  for sufficiently large $n$, 
    \begin{align*}
        \mathrm{ex}(n,(t+1)C_{2k}) 
        = \binom{n}{2}-\binom{n-k(t+1)+1}{2} + 1
    \end{align*}
    holds for all integers $t \in \left[\frac{\mathrm{ex}(n,C_{2k})}{\varepsilon_k n}, \varepsilon_{k} n\right]$. 
\end{theorem}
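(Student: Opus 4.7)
Setting $s := k(t+1)-1$, the plan for the lower bound is to verify that the graph $H = K_s \vee \overline{K_{n-s}}$ with one extra edge added inside the independent part realizes the target: it has exactly $\binom{n}{2}-\binom{n-s}{2}+1 = sn-\binom{s+1}{2}+1$ edges, and contains at most $t$ vertex-disjoint copies of $C_{2k}$. The latter reduces to the claim that every $C_{2k}$ in $H$ uses at least $k$ vertices of the $K_s$-part: if it avoids the extra edge it alternates between the two sides (needing $k$ of each), and if it uses the extra edge then the two endpoints form a single block of consecutive independent-part vertices while all other independent-part vertices are singleton blocks, so the at most $k$ blocks are still separated by at least $k$ clique vertices. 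Thus $t+1$ vertex-disjoint copies would demand $\ge k(t+1)=s+1$ clique vertices, which is impossible.

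For the upper bound I would suppose for contradiction that $G$ is an $n$-vertex graph with $e(G)\ge \binom{n}{2}-\binom{n-s}{2}+2$ containing at most $t$ vertex-disjoint $C_{2k}$. Take a maximum vertex-disjoint family $\mathcal{F}$ of $C_{2k}$'s, let $t' = |\mathcal{F}| \le t$ and $U = V(\mathcal{F})$. By maximality $G - U$ is $C_{2k}$-free, so the Bondy--Simonovits theorem gives $e(G-U) \le 100k\,n^{1+1/k}$; since $t \ge \mathrm{ex}(n, C_{2k})/(\varepsilon_k n)$, this error term is swamped by the main contribution $sn$. The crude edge count
\[
e(G) \;\le\; e(G-U) + 2kt'(n-2kt') + \binom{2kt'}{2}
\]
combined with the hypothesized lower bound on $e(G)$ yields only $t' \ge (t+1)/2 - o(t)$, which by itself is insufficient. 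The next step is a stability argument: I would extract a core set $D \subseteq V(G)$ of $s' \le s$ vertices of degree at least $n - o(n)$ such that $G - D$ has at most $o(n)$ edges. If $|D| \ge k(t+1)$, a greedy matching using the clique structure of $D$ and the large common neighborhoods of its vertices produces $t+1$ vertex-disjoint $C_{2k}$'s each alternating between $D$ and $V \setminus D$, contradicting $\nu_{2k}(G) \le t$.

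It remains to handle $|D| = s - j$ for some $j \ge 0$. Here the hypothesized two extra edges above $sn - \binom{s+1}{2}$ force $G - D$ to contain at least $j + 2$ edges. Each such edge $e^*$ plays the role of the ``$+1$'' edge in the construction: a $C_{2k}$ through $e^*$ can be built using only $k-1$ vertices of $D$, since $e^*$ saves one $D$-vertex in the alternation. The plan is to assemble $j+2$ \emph{cheap} $C_{2k}$'s through these extra edges together with $t-j-1$ \emph{standard} $C_{2k}$'s each using $k$ vertices of $D$, producing $t+1$ vertex-disjoint copies overall. The main obstacle is this assembly: the excess edges in $G-D$ may share endpoints or concentrate inside a dense sub-structure, the few vertices of $D$ with borderline degree must be allocated carefully, and in extreme cases one needs to invoke the Bondy--Simonovits theorem inside $G-D$ enlarged by a few $D$-vertices to produce the required cycles. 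This is a delicate case analysis governed by the degree sequence on $D$ and the combinatorial arrangement of the excess edges in $G-D$.
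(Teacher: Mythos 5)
Your lower bound is correct, and the overall architecture (find high-degree vertices, show they number fewer than $k(t+1)$, control the rest) mirrors the paper's. But the upper bound has a genuine gap, and it is a parity error that your own lower-bound argument already rules out.

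You claim that ``a $C_{2k}$ through $e^*$ can be built using only $k-1$ vertices of $D$, since $e^*$ saves one $D$-vertex in the alternation.'' This is false. If a $C_{2k}$ uses $d$ vertices from $D$ and $2k-d$ vertices outside $D$, the outside vertices split into at most $d$ arcs of consecutive cycle vertices. A single edge $e^*$ allows at most one arc of length $2$, the rest have length $1$, so $2k-d \le (d-1)+2 = d+1$, i.e.\ $d \ge k$ -- exactly the computation you do (correctly) for the lower bound. To drop to $d=k-1$ you need $2k-(k-1)=k+1$ outside vertices in at most $k-1$ arcs, which forces either two arcs of length $2$ (two vertex-disjoint edges, i.e.\ $M_2$) or one arc of length $3$ (a path $P_3 = K_{1,2}$). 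So a \emph{single} excess edge never produces a ``cheap'' $C_{2k}$; you need a $K_{1,2}$ or an $M_2$ outside $D$. This is precisely the insight the paper encodes by observing that every $C_{2k}[k+1]$-free graph on at least $k+1$ vertices is $\{K_{1,2},M_2\}$-free, and it is what makes the answer $\binom{n}{2}-\binom{n-s}{2}+1$ rather than something else: the unique extremal graph $S^{+}(s,n)$ can afford exactly one edge outside the clique because one edge, by itself, is $\{K_{1,2},M_2\}$-free. Your plan of ``assembling $j+2$ cheap cycles through excess edges'' does not account for this and would overcount.

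The paper resolves this with a more refined decomposition than your single set $D$: it uses two degree thresholds ($L_1$ of degree $\ge(1-\delta_1)n$, $L_2$ of degree $\ge\delta_2 n$), further splits $U=V\setminus L$ into $W$ (vertices with almost all of $L$ as neighbors) and $S$ (the rest), and then isolates $S_1\subset S$ of vertices with many $W$-neighbors. The cheap-cycle accounting is then done precisely: $x:=|S_1|$ many cycles are built through $S_1$, the remaining budget $t_3-x$ forces $G[W]$ to be $(t_3-x+1)\cdot\{K_{1,2},M_2\}$-free, and $\mathrm{ex}(m,\{K_{1,2},M_2\})=1$ delivers the ``$+1$.'' The paper also disposes of one whole regime ($t_1\ge\theta$) directly via a pre-established lemma (Lemma~\ref{LEMMA:2nd-weak-bound-Ks1s2}), sidestepping the ``crude edge count'' you correctly note is insufficient. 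Your acknowledgment that the assembly ``is a delicate case analysis'' is honest, but as written the case analysis would be built on an incorrect parity claim and would fail.
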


Define $S(m,n)$ as the graph obtained from an $m$ by $n-m$ complete bipartite graph by adding all pairs in the part of size $m$ as edges. Let $S^{+}(m,n)$ be the graph obtained from $S(m,n)$ by adding an edge into the part of size $n-m$. 
Our proof shows that the unique extremal construction for Theorem~\ref{THM:2nd-C2k-main} is simply $S^{+}(k(t+1)-1,n)$. 
In contrast, the extreml construction for $\mathrm{ex}(n,(t+1)C_{2k})$ when $t = o\left(\frac{\mathrm{ex}(n,C_{2k})}{n}\right)$ is different, and refer the reader to~\cite{HHLLYZ23a} for more details.

Given a graph $F$ and an integer $q \le v(F)$, let 
\begin{align*}
    F[q]
    := \left\{F[S] \colon S\subset V(F) \text{ and } |S| \ge q \right\}.
\end{align*}
For an integer $q$ satisfying $v(F) - \alpha(F) \le q \le v(F)$,  
let 
\begin{align*}
    F^{\mathrm{ind}}[q]
    := \left\{F[S] \colon S\subset V(F),\ |S| \ge q, \text{ and } V(F)\setminus S \text{ is independent}  \right\}.
\end{align*}
In other words, $F[q]$ consists of induced subgraphs of $F$ that can be obtained by removing at most $v(F)-q$ vertices, while $F^{\mathrm{ind}}[q]$ consists of induced subgraphs of $F$ that can be obtained by removing an independent set of size at most $v(F)-q$. 

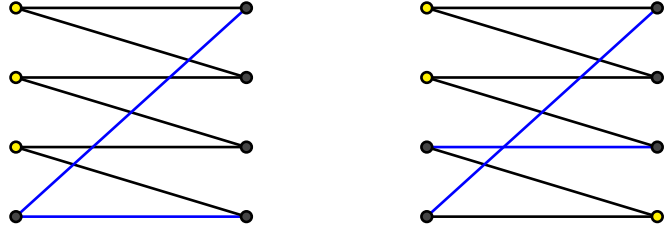
\begin{figure}[htbp]
\centering
\tikzset{every picture/.style={line width=1pt}} 
\begin{tikzpicture}[x=0.75pt,y=0.75pt,yscale=-1,xscale=1]

\draw    (175,70) -- (290,70) ;
\draw    (175,105) -- (290,105) ;
\draw    (175,140) -- (290,140) ;
\draw  [blue]  (175,175) -- (290,175) ;
\draw    (175,70) -- (290,105) ;
\draw    (175,105) -- (290,140) ;
\draw    (175,140) -- (290,175) ;
\draw  [blue]  (175,175) -- (290,70) ;
\draw [fill=yellow]   (175,70) circle (2pt);
\draw [fill=yellow]   (175,105) circle (2pt);
\draw [fill=yellow]   (175,140)  circle (2pt);
\draw [fill=uuuuuu]   (175,175)  circle (2pt);
\draw [fill=uuuuuu]   (290,70)  circle (2pt);
\draw [fill=uuuuuu]   (290,105) circle (2pt);
\draw [fill=uuuuuu]   (290,140)  circle (2pt);
\draw [fill=uuuuuu]   (290,175)  circle (2pt);
%
\draw    (380,70) -- (495,70) ;
\draw    (380,105) -- (495,105) ;
\draw  [blue]  (380,140) -- (495,140) ;
\draw    (380,175) -- (495,175) ;
\draw    (380,70) -- (495,105) ;
\draw    (380,105) -- (495,140) ;
\draw    (380,140) -- (495,175) ;
\draw  [blue]  (380,175) -- (495,70) ;
\draw [fill=yellow]   (380,70) circle (2pt);
\draw [fill=yellow]   (380,105) circle (2pt);
\draw [fill=uuuuuu]   (380,140)  circle (2pt);
\draw [fill=uuuuuu]   (380,175)  circle (2pt);
\draw [fill=uuuuuu]   (495,70)  circle (2pt);
\draw [fill=uuuuuu]   (495,105) circle (2pt);
\draw [fill=uuuuuu]   (495,140)  circle (2pt);
\draw [fill=yellow]   (495,175)  circle (2pt);
\end{tikzpicture}
\caption{Two examples of ways to remove an independent set of size three from $C_{8}$.}
\label{fig:C2k-[k+1]}
\end{figure}

Recall the definition of the covering number $\tau(F)$ of a graph $F$ is 
\begin{align*}
    \tau(F) 
    := \min \left\{|S| \colon \text{$S \subset V(F)$ and $S\cap e\neq \emptyset$ for all $e\in F$} \right\}.
\end{align*}
A minor adjustment to the proof of Theorem~\ref{THM:2nd-C2k-main} leads to the following more general result. For simplicity, we omit its details in this note.

\begin{theorem}\label{THM:2nd-general-biparite-main}
    Let $s_2 \ge s_1 \ge 2$ be integers and $\mathbb{B}:=B_{s_1, s_2}$ be an $s_1$ by $s_2$ connected bipartite graph with $\tau(B) = s_1$. 
    Suppose that some member in $\mathbb{B}^{\mathrm{ind}}[s_2+1]$ is the union of a matching with isolated vertices.  
    Then there exists $\varepsilon >0$ such that for sufficiently large $n$, 
    \begin{align*}
        \mathrm{ex}(n,(t+1)\mathbb{B}) 
        = \binom{n}{2}-\binom{n-\tau(t+1)+1}{2} 
            +\mathrm{ex}(n-\tau(t+1)+1, \mathbb{B}[s_2+1]) 
    \end{align*}
    holds for all integers $t \in \left[\frac{\mathrm{ex}(n, \mathbb{B})}{\varepsilon n}, \varepsilon n\right]$. 
\end{theorem}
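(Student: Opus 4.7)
The proof follows the same template as that of Theorem~\ref{THM:2nd-C2k-main}, with the covering number $\tau(\mathbb{B})=s_1$, the family $\mathbb{B}[s_2+1]$, and the hypothesised matching-plus-isolated member of $\mathbb{B}^{\mathrm{ind}}[s_2+1]$ playing the roles of $k$, $C_{2k}[k+1]$, and the matching-plus-isolated member of $C_{2k}^{\mathrm{ind}}[k+1]$, respectively. I would establish matching upper and lower bounds, with the bulk of the work in the upper bound.

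For the lower bound, take $G^{*}$ on $n$ vertices with a distinguished set $A$ of size $\tau(t+1)-1$ joined completely to $V(G^{*})$, together with an extremal $\mathbb{B}[s_2+1]$-free graph on $V(G^{*})\setminus A$; its edge count equals the claimed value. To verify that $G^{*}$ contains at most $t$ pairwise vertex-disjoint copies of $\mathbb{B}$, suppose otherwise that $B_1,\ldots,B_{t+1}$ are such copies, set $Y_i=V(B_i)\setminus A$, and observe that if some $|Y_i|\ge s_2+1$ then $\mathbb{B}[Y_i]\in\mathbb{B}[s_2+1]$ sits inside $G^{*}[V\setminus A]$, a contradiction. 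Hence $|V(B_i)\cap A|\ge v(\mathbb{B})-s_2=s_1=\tau$ for every $i$, and vertex-disjointness gives $(t+1)\tau\le|A|=\tau(t+1)-1$, another contradiction.

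For the upper bound, let $G$ be $n$-vertex with at most $t$ pairwise vertex-disjoint copies of $\mathbb{B}$, and choose $A\subseteq V(G)$ to be a minimum vertex set with $G-A$ being $\mathbb{B}[s_2+1]$-free. Once $|A|\le\tau(t+1)-1$ is established, the edge bound follows from $e(G)\le\binom{|A|}{2}+|A|(n-|A|)+\mathrm{ex}(n-|A|,\mathbb{B}[s_2+1])$ combined with monotonicity of the right-hand side in $|A|$ on the relevant interval: decreasing $|A|$ by one subtracts roughly $n$ from the join contribution, while it can add at most the increment of $\mathrm{ex}(\cdot,\mathbb{B})=O(\cdot^{1+1/s_1})$ to the last term, and the lower threshold $t\ge\mathrm{ex}(n,\mathbb{B})/(\varepsilon n)$ keeps the former dominant for small $\varepsilon$. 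To prove $|A|\le\tau(t+1)-1$ I argue by contrapositive: assuming $|A|\ge\tau(t+1)$, I construct $t+1$ pairwise vertex-disjoint copies of $\mathbb{B}$. By minimality of $A$, for every $v\in A$ there is some $H_v\in\mathbb{B}[s_2+1]$ in $G-(A\setminus\{v\})$ that necessarily uses $v$. The plan is to first reconfigure $H_v$ into an induced copy of the matching-plus-isolated member $H^{*}$ supplied by the hypothesis on $\mathbb{B}^{\mathrm{ind}}[s_2+1]$, and then complete it to a copy of $\mathbb{B}$ by choosing $s_1-1$ further vertices from $A\setminus\{v\}$ to substitute for the independent set removed to form $H^{*}$. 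Each such copy uses exactly $\tau$ vertices of $A$, so iterating $t+1$ times while removing used vertices between iterations is afforded by the budget $|A|\ge(t+1)\tau$ on the core side and the upper threshold $t\le\varepsilon n$ on the periphery side.

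The main obstacle is the reconfiguration-and-completion step: the member $H_v$ produced by minimality of $A$ may look nothing like the distinguished $H^{*}$, and rerouting it to $H^{*}$ before completing to $\mathbb{B}$ uses the near-completeness of the join between $A$ and $V\setminus A$, which must be guaranteed by a preliminary cleaning step absorbing low-join-degree vertices into $A$ at negligible cost to the edge count. The two thresholds on $t$ are tuned precisely so that the reconfiguration succeeds at every iteration while the cumulative periphery usage stays below $n-|A|$; once this is set up, the remainder is a direct transcription of the argument for Theorem~\ref{THM:2nd-C2k-main}, replacing cycle-specific quantities with their abstract analogues.
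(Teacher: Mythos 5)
Your lower bound verification is correct and matches the intended extremal construction. However, the upper bound diverges from the paper's framework in a way that introduces a genuine gap. The paper proves Theorem~\ref{THM:2nd-C2k-main} (and states that Theorem~\ref{THM:2nd-general-biparite-main} follows by the same route) by defining the ``core'' $L = L_1 \cup L_2$ via explicit \emph{degree thresholds}: $L_1$ consists of vertices of degree at least $(1-\delta_1)n$ and $L_2$ of vertices of degree at least $\delta_2 n$. Near-completeness of the join between $L_1$ and the rest is then automatic, which is exactly what Lemmas~\ref{LEMMA:2nd-perfect-Ks1s2-bipartite-a} and~\ref{LEMMA:2nd-perfect-Ks1s2-bipartite} need in order to pack many disjoint $K_{s_1,s_2}$'s (hence copies of $\mathbb{B}$). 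You instead take $A$ to be a \emph{minimum vertex set with $G-A$ being $\mathbb{B}[s_2+1]$-free}, which carries no degree information about the vertices it contains.

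The consequence is that your key step --- ``$|A|\ge\tau(t+1)$ implies $G$ contains $t+1$ pairwise vertex-disjoint copies of $\mathbb{B}$'' --- is false for arbitrary $(t+1)\mathbb{B}$-free graphs. Let $G$ be $K_m$ with $m=\tau(t+1)+s_2$ together with $n-m$ isolated vertices. Since $K_{s_2+1}$ already contains the smallest member of $\mathbb{B}[s_2+1]$, any $A$ for which $G-A$ is $\mathbb{B}[s_2+1]$-free must delete all but $s_2$ clique vertices, so $|A|\ge m-s_2=\tau(t+1)$; yet $G$ has at most $\lfloor m/(s_1+s_2)\rfloor < t+1$ vertex-disjoint copies of $\mathbb{B}$, all confined to the clique. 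Your proposed ``cleaning step'' cannot repair this: in this example every vertex outside the clique has degree zero, there is nothing to absorb, and the join between $A$ and $V\setminus A$ stays empty, so the completion-to-$\mathbb{B}$ step has no raw material. What is missing is that the \emph{extremality} of $G$ (the high edge count) is what forces the existence of $\Omega(t)$ vertices of degree $\Omega(n)$; the paper builds this in from the start through the degree thresholds defining $L_1$ and $L_2$, and then bounds $|L|$ via Claims~\ref{CLAIM:2nd-H1-upper-bound} and~\ref{CLAIM:2nd-L1+L2-upper-bound}, whereas a minimum cover can be concentrated on low-degree vertices and gives you no foothold. To salvage your argument you would have to inject the edge count into the proof that $|A|\le\tau(t+1)-1$, at which point you would essentially be reproducing the paper's degree-layer decomposition.
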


The extremal construction for Theorem~\ref{THM:2nd-general-biparite-main} is the graph obtained from $S(\tau(t+1)-1,n)$ by adding a maximum $\mathbb{B}[s_2+1]$-free graph to the part of size $n-m$.

The proof of Theorem~\ref{THM:2nd-C2k-main} is presented in Section~\ref{SEC:proof-2nd-C2k}. 
In the next section, we present some definitions and preliminary results. 
\section{Preliminaries}\label{SEC:prelim}
Given a graph $G$ and a set $W \subset V(G)$, we use $G[W]$ to denote the \textbf{induced subgraph} of $G$ on $W$, 
and use $G-W$ to denote the induced subgraph of $G$ on $V(G)\setminus W$. 
Given two disjoint sets $S, T \subset V(G)$, we use $G[S,T]$ to denote the collection of edges in $G$ that have nonempty intersection with both $S$ and $T$.  
We use $\overline{G}$ to denote the \textbf{complement} of $G$. 

We will use the following lower bound for $\mathrm{ex}(n,C_{2k})$ which follows from a standard probabilistic deletion argument. 

\begin{proposition}\label{PROP:C2k-free-lower-bound}
    For every integer $k \ge 2$ it holds that $\mathrm{ex}(n,C_{2k}) = \Omega\left(n^{1+\frac{1}{2k-1}}\right)$. 
\end{proposition}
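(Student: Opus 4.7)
The plan is to use the standard probabilistic deletion argument of Erd\H{o}s. First I would sample a random graph $G \sim G(n,p)$ with $p = c \cdot n^{-(2k-2)/(2k-1)}$, where $c = c(k) > 0$ is a small constant to be specified. Since $-\frac{2k-2}{2k-1} = -1 + \frac{1}{2k-1}$, the expected number of edges is
\begin{align*}
    \mathbb{E}[e(G)] = \binom{n}{2} p = \Theta\!\left(n^{1+\frac{1}{2k-1}}\right).
\end{align*}

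Next I would upper bound the expected number of copies of $C_{2k}$ in $G$. The number of labeled $2k$-cycles in $K_n$ is $\frac{(n)_{2k}}{2(2k)} \le \frac{n^{2k}}{4k}$, and each appears in $G$ with probability $p^{2k}$, so
\begin{align*}
    \mathbb{E}\bigl[\#C_{2k}(G)\bigr]
    \le \frac{n^{2k}}{4k}\, p^{2k}
    = \frac{c^{2k}}{4k}\, n^{2k - 2k\cdot\frac{2k-2}{2k-1}}
    = \frac{c^{2k}}{4k}\, n^{1+\frac{1}{2k-1}}.
\end{align*}
Choosing $c$ small enough that $\frac{c^{2k}}{4k} \le \frac{1}{4}\binom{n}{2}p / n^{1+\frac{1}{2k-1}}$ (which is possible since the ratio is $\Theta(c)$ against $\Theta(c^{2k})$ and $2k > 1$), I can guarantee $\mathbb{E}\bigl[\#C_{2k}(G)\bigr] \le \tfrac{1}{2}\mathbb{E}[e(G)]$.

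Then I would apply the standard deletion step: remove one edge from every copy of $C_{2k}$. The resulting graph $G'$ is $C_{2k}$-free, and by linearity of expectation
\begin{align*}
    \mathbb{E}[e(G')] \ge \mathbb{E}[e(G)] - \mathbb{E}\bigl[\#C_{2k}(G)\bigr] \ge \tfrac{1}{2}\mathbb{E}[e(G)] = \Omega\!\left(n^{1+\frac{1}{2k-1}}\right).
\end{align*}
In particular there exists a realization achieving at least this many edges, which yields the desired lower bound $\mathrm{ex}(n, C_{2k}) = \Omega(n^{1+1/(2k-1)})$.

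There is no real obstacle here; the argument is entirely standard. The only minor point to be careful about is the choice of the constant $c$ relative to $k$ (ensuring the cycle count is dominated by the edge count), which follows from the fact that the exponents of $c$ in the two expectations are $1$ and $2k$ respectively, so any sufficiently small $c = c(k)$ works.
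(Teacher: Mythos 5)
Your proof is correct and is precisely the ``standard probabilistic deletion argument'' that the paper invokes without spelling out the details: sample $G(n,p)$ with $p = \Theta\bigl(n^{-1+1/(2k-1)}\bigr)$, observe that the expected number of $C_{2k}$'s has the same order $n^{1+1/(2k-1)}$ as the expected edge count but with a higher power of the constant $c$, and delete one edge per cycle. The calculations (both expectations scaling as $n^{1+1/(2k-1)}$, and the choice of $c$ to make the cycle count at most half the edge count) are accurate, so there is nothing to fix.
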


The following results can be found in~\cite{HHLLYZ23a}. 

\begin{fact}\label{FACT:binom-inequality-b}
    Suppose that $n, \ell, x \ge 0$ are integers satisfying $\ell + x \le n$. 
    Then 
    \begin{align*}
        \binom{n}{2} - \binom{n-\ell-x}{2} 
        \ge \binom{n}{2} - \binom{n-\ell}{2} + x (n-\ell-x). 
    \end{align*}
\end{fact}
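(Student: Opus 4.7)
The plan is to reduce the claim to a cleaner inequality and then settle it either by a one-line combinatorial counting argument or by direct algebraic expansion. Setting $m := n - \ell$ (which is nonnegative since $\ell \le n$), the inequality to establish becomes
\begin{equation*}
    \binom{m}{2} - \binom{m-x}{2} \ge x(m-x),
\end{equation*}
where $0 \le x \le m$ by the hypothesis $\ell + x \le n$.

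The cleanest route I would take is combinatorial. Fix an $m$-element set $U$ with a distinguished subset $X \subset U$ of size $x$. The left-hand side $\binom{m}{2} - \binom{m-x}{2}$ counts unordered pairs in $U$ that meet $X$. Partitioning such pairs by how many of their endpoints lie in $X$, this count equals
\begin{equation*}
    x(m-x) + \binom{x}{2},
\end{equation*}
with the first term counting pairs having exactly one endpoint in $X$ and the second counting pairs contained in $X$. Since $\binom{x}{2} \ge 0$, the desired inequality follows immediately, and we even obtain the slack $\binom{x}{2}$ for free. Adding back $\binom{n}{2}$ to both sides of the reduced inequality and substituting $m = n - \ell$ recovers the statement of the fact.

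If a fully algebraic derivation is preferred, the identical conclusion follows by expanding $\binom{m}{2} - \binom{m-x}{2} = \tfrac{1}{2}\bigl[m(m-1) - (m-x)(m-x-1)\bigr]$, factoring the difference of squares $m^2 - (m-x)^2 = x(2m-x)$, and collecting terms to reach $x(m-x) + \tfrac{x(x-1)}{2}$. Either way there is no genuine obstacle: the only thing to watch is the nonnegativity condition $m - x = n - \ell - x \ge 0$, which is exactly the hypothesis $\ell + x \le n$ and ensures both $\binom{m-x}{2}$ and the product $x(m-x)$ are meaningful nonnegative quantities.
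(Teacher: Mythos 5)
Your proof is correct. The paper itself gives no proof of this fact, merely citing it from~\cite{HHLLYZ23a}, so there is no in-paper argument to compare against; your substitution $m = n-\ell$ followed by the counting of pairs meeting a fixed $x$-subset (or the equivalent algebraic expansion, both of which produce the exact identity $\binom{m}{2}-\binom{m-x}{2}=x(m-x)+\binom{x}{2}$) is a clean and complete verification, and you correctly note that the hypothesis $\ell+x\le n$ is what guarantees $m-x\ge 0$.
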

%
%
\begin{proposition}[{\cite[Proposition~2.8]{HHLLYZ23a}}]\label{PROP:Turan-ratio}
    Let $F$ be a connected graph. 
    For all integers $n \ge m \ge 1$ we have 
    \begin{align*}
        \frac{\mathrm{ex}(n,F)}{n}
        \ge \left(1 - \frac{m}{n}\right)\frac{\mathrm{ex}(m,F)}{m}. 
    \end{align*}
\end{proposition}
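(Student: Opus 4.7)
The plan is to exploit the connectedness of $F$: a disjoint union of $F$-free graphs is itself $F$-free, since every copy of $F$ must lie inside a single connected component of the host graph. This is the only place where the hypothesis on $F$ enters.

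First I would take an extremal $F$-free graph $G_{0}$ on $m$ vertices, so that $e(G_{0}) = \mathrm{ex}(m, F)$. Setting $k := \lfloor n/m \rfloor$, I would then build an $n$-vertex graph $G$ consisting of $k$ vertex-disjoint copies of $G_{0}$ together with $n - km$ additional isolated vertices. By the observation above, $G$ is $F$-free, so
\begin{align*}
    \mathrm{ex}(n,F) \ge e(G) = \lfloor n/m \rfloor \cdot \mathrm{ex}(m, F).
\end{align*}

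To finish, I would apply the elementary bound $\lfloor n/m \rfloor \ge (n-m)/m$ and divide both sides of the above inequality by $n$; this produces the claimed factor $\left(1 - m/n\right)\mathrm{ex}(m,F)/m$ on the right-hand side. There is essentially no obstacle here: the proposition reduces to a one-line blowup-style construction, and the only substantive point is the remark that connectedness of $F$ permits disjoint unions of extremal examples to remain $F$-free. (If one wanted a slightly tighter inequality without the floor loss, one could additionally put an $F$-free graph on the $n-km$ leftover vertices, but this is not needed for the stated bound.)
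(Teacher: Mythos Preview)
Your argument is correct. The paper does not actually prove this proposition; it is quoted verbatim from~\cite[Proposition~2.8]{HHLLYZ23a} as a preliminary result, so there is no in-paper proof to compare against. The construction you give---$\lfloor n/m\rfloor$ disjoint copies of an extremal $m$-vertex $F$-free graph padded with isolated vertices, together with the bound $\lfloor n/m\rfloor \ge (n-m)/m$---is the standard one and is almost certainly what appears in the cited reference.
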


The following simple but useful lemma gives a rough upper bound for the number of edges in an $(t+1)F$-free $r$-graph with bounded degree. 

\begin{lemma}[{\cite[Lemma~2.13]{HHLLYZ23a}}]\label{LEMMA:trivial-max-degree}
    Suppose that $F$ is a graph on $m$ vertices. 
    Then for every $t \ge 0$, every $n$-vertex $(t+1)F$-free graph $G$ satisfies 
    \begin{align*}
        |G|
        \le mt \cdot \Delta(G) + \mathrm{ex}(n-mt, F). 
    \end{align*}
\end{lemma}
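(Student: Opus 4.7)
My plan is to exploit the $(t+1)F$-freeness by extracting a maximum vertex-disjoint $F$-packing and then estimating everything outside it via $\mathrm{ex}(\cdot,F)$. First I would fix a \emph{maximum} collection $\{F_{1},\dots,F_{s}\}$ of pairwise vertex-disjoint copies of $F$ inside $G$; since $G$ is $(t+1)F$-free, this collection automatically satisfies $s \le t$. Setting $U := \bigcup_{i=1}^{s} V(F_{i})$ gives $|U| = ms \le mt$. Assuming $n \ge mt$ (the opposite case $n < mt$ is degenerate and follows from the trivial bound $|G| \le n\Delta(G)/2 \le mt\cdot\Delta(G)$), I would then pad $U$ arbitrarily to a set $U' \subseteq V(G)$ of size exactly $mt$ by adjoining any $m(t-s)$ additional vertices of $V(G)\setminus U$.

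The crux is that $G - U$ must be $F$-free, since otherwise a further copy of $F$ could be added to the packing, contradicting its maximality. Consequently $G - U'$, being an induced subgraph of $G - U$, is also $F$-free, so $|E(G - U')| \le \mathrm{ex}(n - mt,\, F)$. On the other hand, every edge of $G$ not belonging to $G - U'$ has at least one endpoint in $U'$, and the number of such edges is at most $\sum_{v \in U'} \deg_{G}(v) \le |U'|\cdot\Delta(G) = mt\cdot\Delta(G)$. Adding the two estimates gives the desired inequality $|G| \le mt\cdot\Delta(G) + \mathrm{ex}(n-mt,F)$.

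No step in this plan presents a genuine obstacle: the argument is a clean instance of the ``remove a maximum $F$-packing, then trim'' paradigm, and the only points requiring any care are the trivial regime $n < mt$ and the (immediate) fact that $F$-freeness is preserved under vertex deletion.
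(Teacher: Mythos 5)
Your proof is correct, and it is the natural (and essentially only reasonable) argument for this lemma: take a maximum $F$-packing, observe that the remainder is $F$-free by maximality, pad the packing's vertex set up to size $mt$, and split the edge count into edges meeting the padded set (at most $mt\cdot\Delta(G)$) and edges entirely outside it (at most $\mathrm{ex}(n-mt,F)$). The paper itself does not reproduce a proof -- the lemma is imported verbatim from~\cite{HHLLYZ23a} -- but your argument is exactly the standard one such a statement rests on, including the correct handling of the degenerate regime $n<mt$ and the (easy but necessary) observation that $F$-freeness is inherited by induced subgraphs, which justifies passing from $G-U$ to $G-U'$.
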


Given a family $\mathcal{F}$ of graphs and an integer $t \ge 0$ let 
\begin{align*}
    (t+1) \mathcal{F}
    := \left\{F_1 \sqcup \cdots \sqcup F_{t+1} \colon F_i \in \mathcal{F} \text{ for all } i\in [t+1]\right\}. 
\end{align*}

\begin{proposition}[{\cite[Proposition~5.8]{HHLLYZ23a}}]\label{PROP:2nd-graph-one-star-is-better}
    Let $s_2 \ge s_1 \ge 2$ be integers and $\mathbb{B}:= B_{s_1, s_2}$ be an $s_1$ by $s_2$ bipartite graph. 
    Let $\mathbb{B}':= \mathbb{B}[s_2+1]$. 
    Suppose that $t \le \frac{n}{2e(s_2+1)}$. 
    Then 
    \begin{align*}
        \mathrm{ex}\left(n,(t+1)\mathbb{B}'\right)
        \le \mathrm{ex}\left(n,\mathbb{B}'\sqcup tK_{1,s_2}\right)
        \le \binom{n}{2}-\binom{n-t}{2}+ \mathrm{ex}\left(n-t, \mathbb{B}'\right). 
    \end{align*}
\end{proposition}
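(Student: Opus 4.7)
The plan is to prove the two displayed inequalities separately: the first via an explicit embedding, the second via a greedy extremal argument on the top-degree vertices.

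For the first inequality $\mathrm{ex}(n,(t+1)\mathbb{B}')\le\mathrm{ex}(n,\mathbb{B}'\sqcup tK_{1,s_2})$, I would show that every graph $G$ containing a copy of $B^{*}\sqcup tK_{1,s_2}$, for some $B^{*}\in\mathbb{B}'$, also contains $t+1$ pairwise disjoint copies of members of $\mathbb{B}'$. Write $A,B$ for the two sides of $\mathbb{B}$ of sizes $s_1,s_2$, and fix any $a\in A$. Since $B$ is an independent set in $\mathbb{B}$, the induced subgraph $\mathbb{B}[\{a\}\cup B]$ is the disjoint union of the star $K_{1,\deg_{\mathbb{B}}(a)}$ with $s_2-\deg_{\mathbb{B}}(a)$ isolated vertices; it has exactly $s_2+1$ vertices, so it lies in $\mathbb{B}'$. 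Because its edge set forms a sub-star of $K_{1,s_2}$, it embeds as a (not necessarily induced) subgraph into every copy of $K_{1,s_2}$. Applying this embedding to each of the $t$ stars in $B^{*}\sqcup tK_{1,s_2}$ and adjoining $B^{*}$ itself produces the required $t+1$ pairwise disjoint members of $\mathbb{B}'$ in $G$.

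For the second inequality, let $G$ be an $n$-vertex $\mathbb{B}'\sqcup tK_{1,s_2}$-free graph and let $V_0\subseteq V(G)$ consist of the $t$ vertices of largest degree. The trivial splitting $e(G)\le\binom{t}{2}+t(n-t)+e(G-V_0)=\binom{n}{2}-\binom{n-t}{2}+e(G-V_0)$ reduces the problem to showing that $G-V_0$ is $\mathbb{B}'$-free. Suppose for contradiction that $G-V_0$ contains some $B'\in\mathbb{B}'$ with at most $s_1+s_2$ vertices. I would then greedily build $t$ pairwise disjoint copies of $K_{1,s_2}$ centered at the vertices $v_1,\dots,v_t\in V_0$, each one avoiding $V(B')$ and the previously placed stars. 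At step $i$ the excluded set has size at most $v(B')+(t-1)+(i-1)(s_2+1)\le(t+1)(s_2+1)$, so it suffices for every $v_i$ to have degree at least $(t+1)(s_2+1)+s_2$ in $G$; this would force a copy of $B'\sqcup tK_{1,s_2}$, contradicting the freeness assumption.

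The main obstacle is precisely the case in which the minimum degree over $V_0$ falls below this threshold. Since $V_0$ collects the $t$ top-degree vertices, failure means that every vertex of $V(G)\setminus V_0$ also has degree at most roughly $(t+2)(s_2+1)$, so the sum-of-degrees bound gives $e(G)\le t(n-1)/2+(n-t)(t+2)(s_2+1)/2$, linear in $n$. The hypothesis $t\le n/(2e(s_2+1))$ is tuned precisely so that this direct upper bound still fits within $\binom{n}{2}-\binom{n-t}{2}+\mathrm{ex}(n-t,\mathbb{B}')$, after one uses Proposition~\ref{PROP:Turan-ratio} to relate $\mathrm{ex}(n-t,\mathbb{B}')$ to a global lower bound for $\mathrm{ex}(m,\mathbb{B}')$. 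A cleaner alternative is to proceed by induction on $t$: if there is a vertex $v$ with $d(v)\ge(t+1)(s_2+1)+s_2$, then $G-v$ is $\mathbb{B}'\sqcup(t-1)K_{1,s_2}$-free (any forbidden configuration in $G-v$ would extend to one in $G$ by adjoining a star at $v$), and the induction closes using the identity $\binom{n-1}{2}+(n-1)=\binom{n}{2}$; the low-degree regime remains the lone base case that must be handled by the above direct estimate.
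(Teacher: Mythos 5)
Your derivation of the first inequality is sound: for any fixed $a$ in the small side $A$ of $\mathbb{B}$, the induced graph $\mathbb{B}[\{a\}\cup B]$ is a sub-star of $K_{1,s_2}$ on $s_2+1$ vertices and hence a member of $\mathbb{B}'$, so each of the $t$ stars in a copy of $B^*\sqcup tK_{1,s_2}$ supplies a member of $\mathbb{B}'$, yielding $(t+1)\mathbb{B}'$. The greedy step in the high-degree regime of the second inequality is also fine (modulo a small miscount: the excluded set at step $i$ has size $v(B')+(t-1)+(i-1)(s_2+1)\le s_1+s_2+(t-1)(s_2+2)$, which is not $\le(t+1)(s_2+1)$ for large $t$; this only shifts the degree threshold and is not the real issue).

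The genuine gap is the low-degree base case, and it cannot be patched in the way you suggest. If the minimum degree over the top-$t$ vertices $V_0$ falls below the threshold $D_0\approx t(s_2+1)$, the degree-sum bound gives only
\[
e(G)\le\frac{(t-1)(n-1)+(n-t+1)D_0}{2}\ \approx\ \frac{(s_2+1)\,tn}{2}.
\]
The target is
\[
\binom{n}{2}-\binom{n-t}{2}+\mathrm{ex}(n-t,\mathbb{B}')=\frac{t(2n-t-1)}{2}+\mathrm{ex}(n-t,\mathbb{B}'),
\]
whose leading term is only $tn$. When $\mathrm{ex}(m,\mathbb{B}')$ is bounded --- which happens in the very application the paper needs, since $\mathrm{ex}(m,C_{2k}[k+1])\le 1$ (any $C_{2k}[k+1]$-free graph on $\ge k+1$ vertices is $\{K_{1,2},M_2\}$-free) --- the low-degree bound $\frac{(s_2+1)tn}{2}$ exceeds the target $tn+O(1)$ by a factor of roughly $\frac{s_2+1}{2}\ge\frac{3}{2}$, and this ratio does not improve as $t$ ranges over the whole allowed interval $[1,n/(2e(s_2+1))]$. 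The hypothesis $t\le n/(2e(s_2+1))$ is therefore not ``tuned'' to absorb this slack, and Proposition~\ref{PROP:Turan-ratio} only gives a \emph{lower} bound on $\mathrm{ex}(m,\mathbb{B}')/m$, which is useless when $\mathrm{ex}(m,\mathbb{B}')=O(1)$. Your induction-on-$t$ reformulation inherits exactly the same unproven base case. To close the argument one has to avoid comparing a max-degree-$O(t s_2)$ bound directly against $tn$ --- for instance by showing that a graph violating the target bound must in fact contain $t$ vertices of degree close to $n$ (not merely close to $t s_2$), or by a more structured decomposition; the naive ``few high-degree vertices $\Rightarrow$ few edges'' dichotomy does not balance.
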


Suppose that $G = G[V_1,V_2]$ and $B_{s_1,s_2} = B[W_1, W_2]$ are bipartite graphs. 
An \textbf{ordered copy} of $B_{s_1, s_2}$ in $G$ is a copy of $B_{s_1, s_2}$ in $G$ with the condition that $W_1$ is contained in $V_1$ and $W_2$ is contained in $V_2$. 

The following three technical lemmas are useful for finding many vertex-disjoint copies of a bipartite graph. 

\begin{lemma}[{\cite[Lemma~5.3]{HHLLYZ23a}}]\label{LEMMA:2nd-perfect-Ks1s2-bipartite-a}
    Let $s_2 \ge s_1 \ge 1$ be integers and $\alpha \in (0,1)$ be a real number.  
    The following statement holds for sufficiently large $n$. 
    Suppose that $G$ is an $m$ by $n$ bipartite graph on $V_1$ and $V_2$ that satisfies 
    \begin{align*}
        m \le \frac{(1-\alpha s_1)s_1}{s_2}n \quad\text{and}\quad 
        d_{G}(v)
        \ge (1-\alpha)n \quad\text{for all $v\in V_1$}. 
    \end{align*}
     Then $G$ contains $\left \lfloor  m/s_1\right \rfloor$  pairwise vertex-disjoint ordered copies of $K_{s_1,s_2}$.
\end{lemma}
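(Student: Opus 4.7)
The plan is a clean greedy construction: produce the $\lfloor m/s_1 \rfloor$ ordered copies of $K_{s_1, s_2}$ one at a time, each time removing the $s_1 + s_2$ vertices just used. Suppose after $0 \le i < \lfloor m/s_1 \rfloor$ successful steps I have removed sets $U_1 \subset V_1$ and $U_2 \subset V_2$ of sizes $i s_1$ and $i s_2$ respectively; write $V_1' := V_1 \setminus U_1$ and $V_2' := V_2 \setminus U_2$. Trivially $|V_1'| = m - i s_1 \ge s_1$.

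Next I would bound $|V_2'|$ using the hypothesis on $m$. Since $i \le \lfloor m/s_1 \rfloor - 1 \le m/s_1 - 1$, combining with $m \le (1 - \alpha s_1) s_1 n / s_2$ gives
\[
|V_2'| \;=\; n - i s_2 \;\ge\; n - \frac{m s_2}{s_1} + s_2 \;\ge\; \alpha s_1 n + s_2,
\]
which is the key quantitative estimate.

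For the extension step itself, pick any $s_1$ distinct vertices $u_1, \ldots, u_{s_1} \in V_1'$. The minimum-degree hypothesis yields $|V_2 \setminus N_G(u_j)| \le \alpha n$ for every $j$, so a union bound shows that the common neighborhood $N_G(u_1) \cap \cdots \cap N_G(u_{s_1}) \cap V_2'$ has size at least $|V_2'| - s_1 \alpha n \ge s_2$. Selecting any $s_2$ vertices from this common neighborhood and pairing them with $u_1, \ldots, u_{s_1}$ yields the $(i+1)$-st ordered copy of $K_{s_1, s_2}$, vertex-disjoint from those already built and with its $s_1$-part in $V_1$ and $s_2$-part in $V_2$, as required.

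There is no substantive obstacle: the entire argument reduces to a parameter count plus a union bound, and the coefficient $(1 - \alpha s_1) s_1 / s_2$ in the hypothesis on $m$ has been calibrated precisely so that the leftover vertex counts in $V_1'$ and $V_2'$ stay large enough to support one additional $K_{s_1, s_2}$ at every iteration. Iterating the extension step $\lfloor m/s_1 \rfloor$ times produces the required family of pairwise vertex-disjoint ordered copies of $K_{s_1, s_2}$.
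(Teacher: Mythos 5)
The cited lemma comes from an external reference, so the paper at hand does not reproduce its proof; I can only assess your argument on its own terms. Your greedy construction is correct. The bookkeeping is sound: after $i < \lfloor m/s_1\rfloor$ steps you have $|V_1'| = m - is_1 \ge s_1$, and using $i \le m/s_1 - 1$ together with $m \le \frac{(1-\alpha s_1)s_1}{s_2}n$ gives $|V_2'| = n - is_2 \ge \alpha s_1 n + s_2$. Since each $u_j \in V_1'$ misses at most $\alpha n$ vertices of $V_2$, a union bound leaves at least $|V_2'| - s_1 \alpha n \ge s_2$ common neighbours inside $V_2'$, so the extension step always succeeds and the ordered structure is preserved. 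One small remark: your argument in fact requires no largeness of $n$ at all, so it proves something marginally cleaner than the stated version (which only asserts the conclusion for $n$ sufficiently large); this is a harmless strengthening, not a gap.
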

\begin{lemma}[{\cite[Lemmas~5.1, 5.2]{HHLLYZ23a}}]\label{LEMMA:2nd-perfect-Ks1s2-bipartite}
    Let $s_2 \ge s_1 \ge 1$ be integers and $\alpha \in (0,1)$ be a real number. 
    The following holds for sufficiently large $n$.  
   Suppose that $G$ is an $m$ by $n$ bipartite graph on $V_1$ and $V_2$ such that $m \le \frac{\alpha n}{8s_2}$ and $d_{G}(v) \geq \alpha n$ for every $v \in V_1$. 
   Then $G$ contains at least $\left \lfloor  \frac{m}{s_1} - \frac{4}{\alpha}\right \rfloor$  pairwise vertex-disjoint ordered copies of $K_{s_1, s_2}$.
   
    If, in addition, there are at least $\min\left\{\frac{5s_1(s_1-1)}{\alpha},\ \frac{s_1-1}{s_1}m\right\}$ vertices in $V_1$ with degree at least $\left(1-\frac{\alpha}{2s_1}\right) n$, then $G$ contains $\left \lfloor  m/s_1\right \rfloor$  pairwise vertex-disjoint ordered copies of  $K_{s_1, s_2}$.
\end{lemma}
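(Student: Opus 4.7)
The plan is a greedy extraction: at step $i$, remove one ordered copy of $K_{s_1, s_2}$ from the bipartite graph $G_i = G[V_1^{(i)}, V_2^{(i)}]$ that remains after the first $i$ copies have been deleted, and iterate. The key is to show that one can almost always find such a copy.

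First control the minimum-degree degradation. Since at most $\lfloor m/s_1\rfloor$ iterations occur and $m \le \alpha n/(8 s_2)$, the total number of $V_2$-vertices ever removed is at most $m s_2/s_1 \le \alpha n/(8 s_1)$; thus every $u \in V_1^{(i)}$ retains $d_{G_i}(u) \ge \alpha n - \alpha n/(8 s_1) \ge (7\alpha/8)n$ throughout. At step $i$, set $m' = |V_1^{(i)}|$, $n' = |V_2^{(i)}|$, $\delta = (7\alpha/8)n$. A standard K\H{o}v\'{a}ri--S\'{o}s--Tur\'{a}n double count combined with Jensen's inequality applied to $\binom{x}{s_1}$ gives
\[
\sum_{v \in V_2^{(i)}} \binom{d_{G_i}(v)}{s_1} \ \ge\ n' \binom{m' \delta/n'}{s_1},
\]
so the average common neighborhood of an $s_1$-subset of $V_1^{(i)}$ has size at least $n' \binom{m' \delta/n'}{s_1} \big/ \binom{m'}{s_1}$. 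A brief computation (using $\delta/n' \ge \alpha/2$) shows this exceeds $s_2$ whenever $m' \ge 4/\alpha$ and $n$ is sufficiently large, yielding a fresh $K_{s_1, s_2}$. Iterating until $|V_1^{(i)}| < 4/\alpha$ produces at least $\lfloor m/s_1 - 4/\alpha\rfloor$ vertex-disjoint ordered copies, which is the first conclusion.

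For the refinement, write $V_1^* \subset V_1$ for the set of \emph{heavy} vertices, those with degree at least $(1 - \alpha/(2s_1))n$. The crucial property is that any $s_1$-subset of $V_1^*$ has common neighborhood of size at least $n - s_1 \cdot \alpha n/(2 s_1) = (1 - \alpha/2)n$, so heavy $s_1$-sets extend automatically to $K_{s_1, s_2}$ for $n$ large. The plan is to sequence the greedy so that non-heavy vertices are consumed first, and then to absorb the at most $4/\alpha$ non-heavy leftovers into mixed $s_1$-sets together with heavy vertices; the degree accounting above ensures that any such mixed $s_1$-set still has common neighborhood of size $\ge (7\alpha/8)n \gg s_2$. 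The hypothesis $|V_1^*| \ge \min\{5s_1(s_1-1)/\alpha,\, (s_1-1)m/s_1\}$ is calibrated so that, even after greedy consumption, enough heavy vertices survive to complete the absorption, bringing the total count up to $\lfloor m/s_1\rfloor$.

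The main obstacle is this final bookkeeping in the refined statement: one must verify that the heavy reservoir remains large enough throughout, both as a subset of $V_1^{(i)}$ and in terms of their remaining common neighbors in $V_2^{(i)}$, to form all of the required completing copies. In particular, showing that the two thresholds $5 s_1(s_1-1)/\alpha$ and $(s_1-1)m/s_1$ in the hypothesis cover both the regime where $m$ is of order $1/\alpha$ (the former binds) and the regime where $m$ is larger (the latter binds) is where the main casework lies.
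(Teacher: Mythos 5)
The paper does not prove this lemma itself; it cites it from~\cite{HHLLYZ23a} (Lemmas~5.1, 5.2), so there is no in-text proof to compare against. Judged on its own, your proposal is a reasonable plan but it is not a complete proof, and there are a couple of quantitative slips.

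For the first conclusion, your greedy-plus-K\H{o}v\'ari--S\'os--Tur\'an convexity argument is the right idea and works, but the stopping threshold is not $m' \ge 4/\alpha$: with $m'p \ge (4/\alpha)\cdot(7\alpha/8) = 7/2$, the quantity $\binom{m'p}{s_1}$ can vanish or be negative once $s_1 \ge 4$, so the convexity step collapses. The correct threshold is $m' \ge \Theta(s_1/\alpha)$ (say $m' \ge 4s_1/\alpha$), which makes $m'p \ge 2s_1$ and lets the ratio $\binom{m'p}{s_1}/\binom{m'}{s_1}$ be bounded below by a constant depending only on $\alpha, s_1$. Happily this still yields $\ge (m - 4s_1/\alpha)/s_1 = m/s_1 - 4/\alpha$ copies, so the claimed bound survives, but as written the argument fails for large $s_1$ and you should state the threshold with the extra factor of $s_1$.

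The genuine gap is in the second conclusion. You describe a plan — consume non-heavy vertices first, then absorb the leftover non-heavy vertices one per copy together with $s_1-1$ heavy vertices, then group the remaining heavy vertices — and you yourself flag the ``final bookkeeping'' as the main obstacle and leave it undone. That bookkeeping is precisely the content of the second statement: one must check that a mixed $s_1$-set (one non-heavy plus $s_1-1$ heavy) still has common neighborhood of size $\gg s_2$ after the greedy has removed up to $ms_2/s_1$ vertices of $V_2$, that the leftover non-heavy count after the first phase is at most order $s_1/\alpha$ (not $4/\alpha$ — same off-by-$s_1$ issue as above, and it propagates into the heavy-vertex budget, which must therefore be about $s_1(s_1-1)/\alpha$ rather than $(s_1-1)/\alpha$), and that in both regimes of $\min\{5s_1(s_1-1)/\alpha,\ (s_1-1)m/s_1\}$ the heavy reservoir covers the absorption cost with the leftover $< s_1$ vertices accounted for. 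None of this is carried out, so the refined bound $\lfloor m/s_1 \rfloor$ is not established by your proposal as written.
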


\begin{lemma}[{\cite[Lemma~5.4]{HHLLYZ23a}}]\label{LEMMA:2nd-weak-bound-Ks1s2}
    Let $s_2 \ge s_1 \ge 1$ be integers and $\alpha \in (0,1)$ be a real number.  
    Let $\mathbb{B}:= B_{s_1,s_2}$ be a bipartite graph with part sizes $s_1, s_2$. 
    For every $\alpha > 0$ there exists  $N_0$ such that the following statement holds for all $n \ge N_0$ and $t \le \frac{\alpha n}{18 (s_1+s_2)^2}$. 
    Every $n$-vertex $(t+1)\mathbb{B}$-free graph $G$ satisfies  
    \begin{align*}
        |G|
        \le \left(s_1t + \frac{34(s_1+s_2)}{\alpha} \right) \Delta(G)
            + \frac{\alpha s_1}{4} t n 
            + \mathrm{ex}(n-st,\mathbb{B}). 
    \end{align*}
    If, in addition, $\Delta(G) \le (1-\alpha)n$ and $t \ge \frac{5}{\alpha s_1}\left(\frac{\mathrm{ex}(n,\mathbb{B})}{n-1} + \frac{36(s_1+s2)}{\alpha}\right)$, then
    \begin{align*}
        |G| 
        \le \binom{n}{2}-\binom{n-s_1(t-1)}{2}. 
    \end{align*}
\end{lemma}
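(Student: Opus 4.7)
The plan is to isolate a small set $W$ of high-degree vertices, bound $|W|$ via Lemma~\ref{LEMMA:2nd-perfect-Ks1s2-bipartite}, and estimate the edges of $G$ by splitting them into those incident to $W$ and those entirely outside. Set
$$\theta := \frac{\alpha s_1}{4(s_1+s_2)} \quad\text{and}\quad W := \{v \in V(G) : d_G(v) \ge \theta n\}.$$
The value of $\theta$ is calibrated so that the contribution $(s_1+s_2)\,t\,\theta n$ which appears after applying Lemma~\ref{LEMMA:trivial-max-degree} later matches the target error $\tfrac{\alpha s_1}{4} t n$ exactly.

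The first step is to show $|W| \le s_1 t + \frac{34(s_1+s_2)}{\alpha}$. Consider the bipartite graph between $W$ and $V(G)\setminus W$: every $w \in W$ has at least $\theta n - |W|$ neighbours in $V(G)\setminus W$. Apply Lemma~\ref{LEMMA:2nd-perfect-Ks1s2-bipartite} with parameter $\alpha_1 := \theta/2$, possibly after restricting to a subset $W_0 \subseteq W$ of size at most $\lfloor \alpha_1 (n-|W|)/(8s_2)\rfloor$ (as required by the lemma). This produces at least $\lfloor |W_0|/s_1 - 4/\alpha_1\rfloor$ pairwise vertex-disjoint ordered copies of $K_{s_1, s_2}$, each of which contains a copy of $\mathbb{B}$ because $\mathbb{B} \subseteq K_{s_1, s_2}$. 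The $(t+1)\mathbb{B}$-free hypothesis caps this count at $t$, which rearranges to the claimed bound on $|W|$.

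Next, estimate $|G| \le |W|\,\Delta(G) + |G[V(G)\setminus W]|$. The induced subgraph $G[V(G)\setminus W]$ is still $(t+1)\mathbb{B}$-free, and by construction has maximum degree at most $\theta n$. Applying Lemma~\ref{LEMMA:trivial-max-degree} with $F = \mathbb{B}$ (on $s_1+s_2$ vertices) to it gives
$$|G[V(G)\setminus W]| \le (s_1+s_2)\,t\,\theta n + \mathrm{ex}\bigl(n-|W|-(s_1+s_2)t,\,\mathbb{B}\bigr) \le \frac{\alpha s_1 t n}{4} + \mathrm{ex}(n - s_1 t, \mathbb{B}),$$
by the choice of $\theta$ and monotonicity of $\mathrm{ex}(\cdot,\mathbb{B})$. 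Combining with the $|W|$ bound yields the first inequality. For the ``in addition'' part, substituting $\Delta(G) \le (1-\alpha)n$ into the first bound creates a ``savings'' of $-\tfrac{3\alpha s_1}{4} t n$; the lower bound on $t$ assumed in the hypothesis is tuned precisely so that this savings absorbs both $\frac{34(s_1+s_2)}{\alpha}\Delta(G)$ and $\mathrm{ex}(n - s_1 t, \mathbb{B})$, driving the total below $\binom{n}{2}-\binom{n-s_1(t-1)}{2} = s_1(t-1)n - \binom{s_1(t-1)+1}{2}$.

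The main obstacle lies in the application of Lemma~\ref{LEMMA:2nd-perfect-Ks1s2-bipartite}: the lemma requires the $V_1$-side to satisfy $m \le \alpha_1 n/(8 s_2)$, and with our small $\alpha_1 = \Theta(\alpha/(s_1+s_2))$ this cap is quite restrictive. One must verify that, for $n$ large enough and under the hypothesis $t \le \alpha n/(18(s_1+s_2)^2)$, the subset $W_0$ of size slightly above $s_1 t + 4s_1/\alpha_1$ needed to derive a contradiction indeed fits inside this cap; the constant $18$ in the hypothesis is chosen so that this comparison works with room to spare.
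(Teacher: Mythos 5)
The paper does not prove this lemma itself; it is cited verbatim from \cite{HHLLYZ23a}. So there is no internal proof to compare your argument against, and I can only assess it on its own merits.

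Your overall decomposition (isolate the high-degree set $W$, bound $|W|$ via Lemma~\ref{LEMMA:2nd-perfect-Ks1s2-bipartite}, bound $|G-W|$ via Lemma~\ref{LEMMA:trivial-max-degree}, and then combine) is the natural route, and the choice $\theta = \frac{\alpha s_1}{4(s_1+s_2)}$ is indeed the calibration forced by the term $\frac{\alpha s_1}{4}tn$. However, the cap verification that you flag as the ``main obstacle'' and then assert is fine with ``room to spare'' is in fact \emph{false}, and this is a genuine gap. With $\alpha_1 = \theta/2 = \frac{\alpha s_1}{8(s_1+s_2)}$, the cap in Lemma~\ref{LEMMA:2nd-perfect-Ks1s2-bipartite} allows
\begin{align*}
|W_0| \le \frac{\alpha_1\,(n-|W_0|)}{8 s_2} < \frac{\theta n}{16 s_2} = \frac{\alpha s_1 n}{64\, s_2(s_1+s_2)}.
\end{align*}
To reach a contradiction when $|W| > s_1 t + \frac{34(s_1+s_2)}{\alpha}$ you need a subset $W_0$ with $\lfloor |W_0|/s_1 - 4/\alpha_1\rfloor \ge t+1$, hence $|W_0| \ge s_1(t+1) + 4s_1/\alpha_1 > s_1 t$. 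But the hypothesis allows $t$ up to $\frac{\alpha n}{18(s_1+s_2)^2}$, in which case $s_1 t$ can be as large as $\frac{\alpha s_1 n}{18(s_1+s_2)^2}$, and since $s_1 \le s_2$ implies $18(s_1+s_2) \le 36 s_2 < 64 s_2$, we always have
\begin{align*}
\frac{\alpha s_1 n}{18(s_1+s_2)^2} > \frac{\alpha s_1 n}{64\, s_2(s_1+s_2)},
\end{align*}
so the required $W_0$ strictly exceeds the cap. This is not a matter of unbalanced $s_1, s_2$: already for $s_1=s_2=k$ the cap is roughly $\frac{\alpha n}{128 k}$ while $s_1 t$ reaches $\frac{\alpha n}{72 k}$. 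Taking $\alpha_1$ as large as the degree condition permits (about $\theta$ rather than $\theta/2$) only improves the cap to roughly $\frac{\alpha s_1 n}{32 s_2(s_1+s_2)}$, which still loses to $\frac{\alpha s_1 n}{18(s_1+s_2)^2}$ whenever $14 s_2 > 18 s_1$, i.e.\ for all $s_2 > \tfrac{9}{7}s_1$. The constant $18$ in the hypothesis is therefore not ``tuned'' to make this comparison work; either the source proof uses a genuinely different argument (e.g.\ a greedy peeling that removes only $s_1$ vertices per extracted copy, or an iterated application of the bipartite lemma) or its constants differ. As written, your proof of the first inequality breaks at exactly the point you yourself identified as delicate, and the remainder of the argument (including the ``in addition'' step, which otherwise looks arithmetically sound) rests on that unproven bound for $|W|$.
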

\section{Proof of Theorem~\ref{THM:2nd-C2k-main}}\label{SEC:proof-2nd-C2k}
In this section, we prove Theorem~\ref{THM:2nd-C2k-main}. 
Our proof strategy begins by considering the collection $L \subset V(G)$ of all vertices in a maximum $n$-vertex $(t+1)C_{2k}$-free graph $G$ with degree $\Omega(n)$. 
We will show that a significant proportion of vertices in $L$ possess a degree near $n$. 
Consequently, leveraging Lemma~\ref{LEMMA:2nd-perfect-Ks1s2-bipartite}, we can show that the size of $L$ is less than $k(t+1)$.
Following this, a more detailed analysis of the structure of $G-L$ will show that $G-L$ is $\{K_{1,2}, M_2\}$-free, and hence, contains at most one edge. 
Here, $M_2$ denotes the matching with two edges.

\begin{proof}[Proof of Theorem~\ref{THM:2nd-C2k-main}]
    Fix an integer $k \ge 3$, and let $\mathbb{C} := C_{2k}$ for simplicity. 
    Let 
    \begin{align*}
        \delta_1 := \frac{1}{32k^2}, 
        \quad  
        \delta_2 := \frac{1}{4k}, 
        \quad 
        \delta_3 := \frac{1}{2(k+1)}, \quad 
        \varepsilon := \frac{\delta_1}{144 k^2}, 
        \quad
        \theta:= \frac{20}{\delta_1 k}\frac{\mathrm{ex}(n,\mathbb{C})}{n}. 
    \end{align*}
    Let $n$ be sufficiently large and $t$ be an integer satisfying 
    \begin{align}\label{equ:2nd-C2k-t-range}
        \frac{72 k}{\delta_3} \theta
        \le t 
        \le \varepsilon n.
    \end{align}
    Let $G$ be a maximum $n$-vertex $(t+1)\mathbb{C}$-free graph with vertex set $V$. 
    Let 
    \begin{align*}
        L_{1} := \left\{v\in V \colon d_{G}(v) \ge (1-\delta_1)n\right\},
        \quad  
        L_2  := \left\{v\in V\setminus L_1 \colon d_{G}(v) \ge \delta_2 n\right\}, 
    \end{align*}
    and 
    \begin{align*}
        L & := L_1\cup L_2, \quad  
        U := V \setminus L, \quad 
        \ell:= |L|, \quad 
        \ell_1 := |L_1|, \quad 
        \ell_2:= |L_2|. 
    \end{align*}
    In addition, let 
    \begin{align*}
        G_1:= G - L_1,\quad 
        G_2:= G[U], \quad 
        n_1:= n-\ell_1, 
        \quad\text{and}\quad
         t_1 := t - \left\lfloor \ell_1/k \right\rfloor. 
    \end{align*}
    \begin{claim}\label{CLAIM:2nd-H1-upper-bound}
        We have $\ell_1 < k(t+1)$ and $G_1$ is $(t_1 + 1)\mathbb{C}$-free. 
    \end{claim}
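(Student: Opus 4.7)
The plan is to prove both assertions by contradiction, using Lemma~\ref{LEMMA:2nd-perfect-Ks1s2-bipartite-a} to embed many pairwise vertex-disjoint ordered copies of $K_{k,k}$ between $L_1$ and the rest of the graph. Since $\mathbb{C}=C_{2k}$ is a spanning subgraph of $K_{k,k}$, every such $K_{k,k}$ contains a copy of $\mathbb{C}$, and vertex-disjoint $K_{k,k}$'s give rise to vertex-disjoint copies of $\mathbb{C}$.

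For the first assertion, I will suppose that $\ell_1\ge k(t+1)$ and derive the forbidden $(t+1)\mathbb{C}$. Choose $L_1'\subseteq L_1$ with $|L_1'|=m:=k(t+1)$ and set $V_2:=V\setminus L_1'$, so $|V_2|=n-k(t+1)$. For every $v\in L_1'$, $d_G(v,V_2)\ge(1-\delta_1)n-(k(t+1)-1)\ge(1-2\delta_1)|V_2|$, where the last inequality is comfortable because $k(t+1)\le 2k\varepsilon n$ and $\varepsilon=\delta_1/(144k^2)$ is tiny. Apply Lemma~\ref{LEMMA:2nd-perfect-Ks1s2-bipartite-a} to $G[L_1',V_2]$ with $s_1=s_2=k$ and $\alpha=2\delta_1$: the size condition $m\le(1-\alpha k)|V_2|$ reduces to $k(t+1)\le(1-2\delta_1 k)(n-k(t+1))$, which holds since $2\delta_1 k\le 1/(16k)\le 1/48$ and $k(t+1)\ll n$. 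The lemma produces $\lfloor m/k\rfloor=t+1$ pairwise vertex-disjoint ordered copies of $K_{k,k}$, hence $t+1$ pairwise vertex-disjoint copies of $\mathbb{C}$ in $G$, contradicting the $(t+1)\mathbb{C}$-freeness of $G$.

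For the second assertion, I will suppose $G_1$ contains $t_1+1$ pairwise vertex-disjoint copies of $\mathbb{C}$, and let $W\subseteq V\setminus L_1$ denote their vertex set, so $|W|=2k(t_1+1)\le 2k(t+1)$. It suffices to find $\lfloor\ell_1/k\rfloor$ additional pairwise vertex-disjoint copies of $\mathbb{C}$ in $G-W$ that use the vertices of $L_1$, since together with the $t_1+1$ copies in $G_1$ this yields $t+1$ pairwise vertex-disjoint copies of $\mathbb{C}$ in $G$, a contradiction. Set $V_2:=V\setminus L_1\setminus W$; then $|V_2|\ge n-\ell_1-|W|\ge n-3k(t+1)$, and for every $v\in L_1$ we still have $d_G(v,V_2)\ge(1-\delta_1)n-\ell_1-|W|\ge(1-2\delta_1)|V_2|$ by the same slack as before. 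Applying Lemma~\ref{LEMMA:2nd-perfect-Ks1s2-bipartite-a} to $G[L_1,V_2]$ with $s_1=s_2=k$ and $\alpha=2\delta_1$ (the size bound $\ell_1\le(1-2\delta_1 k)|V_2|$ is immediate from $\ell_1<k(t+1)$ established in the first part and $|V_2|\ge n-3k(t+1)$) yields the desired $\lfloor\ell_1/k\rfloor$ pairwise vertex-disjoint ordered copies of $K_{k,k}$ in $G[L_1,V_2]$, finishing the proof.

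The only genuine work is routine parameter-checking to verify the hypotheses of Lemma~\ref{LEMMA:2nd-perfect-Ks1s2-bipartite-a} in both steps, and this is comfortable given the very small choice $\varepsilon=\delta_1/(144k^2)$. The conceptual core is that nearly-full-degree vertices embed $K_{k,k}$'s so densely that a putative packing in $G_1$ still leaves enough room to extend through $L_1$; the large gap between $K_{k,k}$ and $C_{2k}$ is what provides the slack.
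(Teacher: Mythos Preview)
Your proposal is correct and follows essentially the same approach as the paper's proof: both parts are argued by contradiction, applying Lemma~\ref{LEMMA:2nd-perfect-Ks1s2-bipartite-a} with $s_1=s_2=k$ and $\alpha=2\delta_1$ to the bipartite graph between (a suitable subset of) $L_1$ and the remaining vertices to extract the required number of vertex-disjoint $K_{k,k}$'s, and hence copies of $\mathbb{C}$. The only differences are notational (the paper writes $H$, $H_1$, $B$ where you write $G[L_1',V_2]$, $G[L_1,V_2]$, $W$) and that you spell out the parameter checks slightly more carefully.
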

    \begin{proof}
        First, suppose to the contrary that $\ell_1 \ge k(t+1)$. 
        We may assume that $\ell_1 = k(t+1)$ since otherwise we can replace $L_1$ by a subset of size $k(t+1)$. 
        Let $H$ be the collection of edges in $G$ that have exactly one vertex in $L_1$. 
        Notice that $H$ is bipartite, and for every $v\in L_1$ it follows from the assumptions on $t$ and $\delta_1$ that 
        \begin{align*}
            d_{H}(v)
            \ge d_{G}(v) - |L_1| 
             \ge (1-\delta_1)n - k(t+1) 
             \ge (1-\delta_1)n - 2\varepsilon k n 
             > \left(1- 2\delta_1\right)n. 
        \end{align*}
        Since, in addition, $\frac{\left(1-2k\delta_1\right)k}{k}(n-\ell_1) \ge n/4 \ge \ell_1$ (by~\eqref{equ:2nd-C2k-t-range}), 
        it follows from Lemma~\ref{LEMMA:2nd-perfect-Ks1s2-bipartite-a} that $(t+1)\mathbb{C} \subset (t+1)K_{k,k} \subset H$, a contradiction.

        Now suppose to the contrary that there exists a collection  $\mathcal{B} = \{\mathbb{C}_1, \ldots, \mathbb{C}_{t_1+1}\}$ of $t_1+1$ pairwise vertex-disjoint copies of $\mathbb{C}$ in $G_1$. 
        Let $B := \bigcup_{i\in [t_1+1]}V(\mathbb{C}_i)$ and $V' := V\setminus B$.
        Let $H_1$ be the collection of edges in $G[V']$ that contain exactly one vertex in $L_1$. 
        Similar to the argument above, we have 
        \begin{align*}
            d_{H_1}(v)
            \ge d_{G}(v) - |L_1\cup B_1| 
             \ge (1-\delta_1)n - 2k(t+1) 
             > \left(1- 2\delta_1\right)n. 
        \end{align*}
        Similarly, it follows from~\eqref{equ:2nd-C2k-t-range} and  Lemma~\ref{LEMMA:2nd-perfect-Ks1s2-bipartite-a} that  $\lfloor \ell_1/k \rfloor \mathbb{C} \subset H_1$, which together with $\mathcal{B}$ implies that $(t+1)\mathbb{C}\subset G$, a contradiction.  
    \end{proof}
    By Claim~\ref{CLAIM:2nd-H1-upper-bound}, we obtain 
    \begin{align*}
        n_1 
        = n-\ell_1  
        \ge n- k (t+1) 
        \ge (1-2\varepsilon k)n
        \ge \left(1-\frac{\delta_1}{2}\right)n. 
    \end{align*}
    Combined with the definition of $L_1$, we obtain     \begin{align*}
        \Delta(G_1)
        \le (1-\delta_1)n
         = (1-\delta_1) \times \frac{n}{n_1} \times n_1 
         \le \left(1-\frac{\delta_1}{2}\right)n_1. 
    \end{align*}
    Notice from the definition of $\theta$ and Proposition~\ref{PROP:C2k-free-lower-bound} that  
    \begin{align*}
        \theta
        = \frac{20}{\delta_1 k}\frac{\mathrm{ex}(n,\mathbb{C})}{n}
        \ge \frac{5}{\delta_1 k/2}\left(\frac{\mathrm{ex}(n_1,\mathbb{C})}{n_1-1} + \frac{72k}{\delta_1/2}\right). 
    \end{align*}
    If $t_1 \ge  \theta$, then 
    it follows from $t_1 \le t \le \varepsilon n \le \frac{\delta_1 n_1/2}{72k^2}$ and Lemma~\ref{LEMMA:2nd-weak-bound-Ks1s2} that 
    \begin{align*}
        |G_1|
        \le \binom{n_1}{2} - \binom{n_1-k(t_1-1)}{2}
        = \binom{n-\ell_1}{2} - \binom{n-k t}{2}. 
    \end{align*}
    Consequently, 
    \begin{align*}
        |G|
        \le \binom{n}{2} -\binom{n-\ell_1}{2} + |G_1|
        \le \binom{n}{2} - \binom{n-k t}{2},  
    \end{align*}
    and we are done. 
    
    So we may assume that $t_1 \le \theta$. 
    It follows from $t \ge 80k\theta/\delta_3$ and the definition of $t_1$ that 
    \begin{align}\label{equ:2nd-hygp-L1-lower-bound}
        \ell_1 
        \ge k \left(t - t_1\right)
        \ge k \left(t - \theta \right)
        \ge k \left(t - \frac{t-k+1}{k}\right)
        =  \frac{k-1}{k} \times k(t+1). 
    \end{align}
    Let 
    \begin{align*}
        t_2
        := t- \left\lfloor \ell/k \right\rfloor
        \le t_1
        \le \theta. 
    \end{align*}
    \begin{claim}\label{CLAIM:2nd-L1+L2-upper-bound}
        We have $\ell_1 + \ell_2 \le k(t+1)-1$, and $G[U]$ is $(t_2+1) \mathbb{C}$-free. 
        In particular, 
        \begin{align}\label{equ:THM-2nd-pg-GU-upper-bound}
                    |G[U]|
                    \le 2k t_2 \times \delta_2 n + \mathrm{ex}(n-\ell-2kt_2, \mathbb{C})
                    \le \frac{\theta n}{2} +  \mathrm{ex}(n, \mathbb{C})
                    \le \theta n. 
        \end{align}  
    \end{claim}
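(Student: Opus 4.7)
The plan is to prove the two assertions by contradiction, in both cases finding $t+1$ pairwise vertex-disjoint copies of $\mathbb{C}$ by combining internal copies with extra $C_{2k} \subset K_{k,k}$'s built from the high/medium-degree vertices and the rest of the graph via Lemma~\ref{LEMMA:2nd-perfect-Ks1s2-bipartite}. Throughout, $L_1$ provides the ``almost complete'' vertices needed for the refined conclusion of that lemma.

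\textbf{Step 1: $\ell_1+\ell_2\le k(t+1)-1$.} Suppose for contradiction that $\ell_1+\ell_2\ge k(t+1)$, and pick a subset $L'\subset L$ of size exactly $k(t+1)$ containing all of $L_1$ (which is possible since $\ell_1<k(t+1)$ by Claim~\ref{CLAIM:2nd-H1-upper-bound}). Consider the bipartite graph $H$ on parts $L'$ and $V\setminus L'$ induced by $G$. Every $v\in L'$ has $d_H(v)\ge \delta_2 n - k(t+1)\ge (\delta_2-2\varepsilon k)n$, which is $\gtrsim (\delta_2/2)(n-k(t+1))$, so the ``weak degree'' hypothesis of Lemma~\ref{LEMMA:2nd-perfect-Ks1s2-bipartite} (with $s_1=s_2=k$ and $\alpha\approx \delta_2/2$) is met: the size condition $k(t+1)\le \alpha(n-k(t+1))/(8k)$ follows from $t\le \varepsilon n$ and the definition of $\varepsilon$. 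For the additional ``high-degree'' condition, every $v\in L_1$ satisfies $d_H(v)\ge (1-\delta_1)n-k(t+1)\ge (1-\alpha/(2k))(n-k(t+1))$, using the specific choice $\delta_1=1/(32k^2)$ versus $\delta_2=1/(4k)$. By~\eqref{equ:2nd-hygp-L1-lower-bound}, $\ell_1\ge (k-1)(t+1)\ge \frac{k-1}{k}|L'|$, so the refined conclusion of Lemma~\ref{LEMMA:2nd-perfect-Ks1s2-bipartite} yields $\lfloor k(t+1)/k\rfloor=t+1$ vertex-disjoint copies of $K_{k,k}\supset \mathbb{C}$ in $H$, contradicting $(t+1)\mathbb{C}$-freeness.

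\textbf{Step 2: $G[U]$ is $(t_2+1)\mathbb{C}$-free.} Assume it contains $t_2+1$ pairwise vertex-disjoint copies of $\mathbb{C}$, with vertex set $B\subset U$, $|B|=2k(t_2+1)\le 2k(\theta+1)$. Now run essentially the same argument on the bipartite graph $H'$ between $L$ and $V\setminus L\setminus B$: again each $v\in L$ has degree at least $\delta_2 n - |L| - |B|$ into $V\setminus L\setminus B$, which is a $\gtrsim \delta_2/2$ fraction; each $v\in L_1$ has degree at least $(1-\alpha/(2k))(n-\ell-|B|)$; and by~\eqref{equ:2nd-hygp-L1-lower-bound} together with $\ell\le k(t+1)-1$ from Step~1, the high-degree subset $L_1$ has size at least $\frac{k-1}{k}\ell$. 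Thus Lemma~\ref{LEMMA:2nd-perfect-Ks1s2-bipartite} delivers $\lfloor\ell/k\rfloor$ further vertex-disjoint copies of $K_{k,k}\supset \mathbb{C}$, all avoiding $B$. Adding these to the $t_2+1$ copies in $B$ gives $t_2+1+\lfloor\ell/k\rfloor=t+1$ vertex-disjoint copies of $\mathbb{C}$ in $G$, a contradiction.

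\textbf{Step 3: Edge count.} Since $G[U]$ is $(t_2+1)\mathbb{C}$-free and every $v\in U$ has $d_G(v)<\delta_2 n$, so $\Delta(G[U])\le \delta_2 n$, Lemma~\ref{LEMMA:trivial-max-degree} applied to $\mathbb{C}=C_{2k}$ gives
\[
|G[U]|\le 2k t_2\cdot \delta_2 n + \mathrm{ex}(|U|-2kt_2,\mathbb{C}).
\]
With $\delta_2=1/(4k)$ this reads $2k t_2\delta_2 n=t_2 n/2\le \theta n/2$; bounding the second term by $\mathrm{ex}(n,\mathbb{C})\le \delta_1 k\,\theta n/20$ (using the definition of $\theta$) gives $|G[U]|\le \theta n/2 + \mathrm{ex}(n,\mathbb{C}) \le \theta n$, as claimed.

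The main obstacle is the careful book-keeping in Steps 1 and 2: one needs to verify simultaneously the size condition $m\le \alpha n/(8s_2)$, the weak degree bound $\delta_2 n -O(\varepsilon n)\ge \alpha n$, and the ``near-full degree'' bound for the $L_1$-vertices, while knowing $\ell_1$ is close to $k(t+1)$. All three are engineered to work out thanks to the tight choice of the constants $\delta_1,\delta_2,\delta_3,\varepsilon,\theta$ and the lower bound on $\ell_1$ extracted via~\eqref{equ:2nd-hygp-L1-lower-bound}.
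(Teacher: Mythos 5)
Your proof is correct and uses the same technique as the paper: in both cases you assume a counterexample and then produce $t+1$ pairwise vertex-disjoint copies of $\mathbb{C}$ by finding copies of $K_{k,k}$ in a bipartite graph between $L$ (or a trimmed $L'$) and the rest of the graph, applying the refined conclusion of Lemma~\ref{LEMMA:2nd-perfect-Ks1s2-bipartite} with the high-degree vertices in $L_1$ supplying the ``near-full degree'' fraction, via the lower bound \eqref{equ:2nd-hygp-L1-lower-bound}. The only cosmetic difference is that you handle the two assertions ($\ell \le k(t+1)-1$ and $(t_2+1)\mathbb{C}$-freeness of $G[U]$) as two separate contradiction arguments, whereas the paper unifies them into a single application of the lemma by case-splitting on $\ell \ge k(t+1)$ versus $\ell < k(t+1)$ and setting $B'$ accordingly (empty in the first case, the vertex set of the $t_2+1$ disjoint copies in the second). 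Your Step~3 edge count matches the paper's verbatim.
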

    \begin{proof}
        The proof is similar to that of Claim~\ref{CLAIM:2nd-H1-upper-bound}. 
        Suppose to the contrary that Claim~\ref{CLAIM:2nd-L1+L2-upper-bound} is not true. 
        If $\ell < k(t+1)$, then let $\mathcal{B}' := \left\{\mathbb{C}_1', \ldots, \mathbb{C}_{t_2+1}' \right\}$ be a collection of pairwise vertex-disjoint copies of $\mathbb{C}$ in $G[U]$ and let $B':= \bigcup_{i\in [t_2+1]}V(\mathbb{C}_i')$. 
        If $\ell \ge k(t+1)$, then by removing vertices in $L_2$, we may assume that $\ell= k(t+1)$. In addition, we set $B':= \emptyset$ in this case. 
         Let $H_2$ be the collection of all edges in $G[V\setminus B']$ that contain exactly one vertex in $L$.
        To build a contradiction, it suffices to show that $\lfloor \ell/k \rfloor \mathbb{C} \subset H_2$.  
        Indeed, observe that for every $v\in L_1$, we have 
        \begin{align*}
            d_{H_2}(v)
             \ge d_{G}(v) - |L\cup B'|   
             \ge (1-\delta_1)n - 2k(t+1)
             \ge (1-\delta_1)n - 4\varepsilon kn
             \ge \left(1 - \frac{\delta_2}{4k}\right)n, 
        \end{align*}
        and for every $u\in L_2$, we have 
        \begin{align*}
            d_{H_2}(u)
             \ge d_{G}(u) - |L\cup B'|   
             \ge \delta_2n - 4\varepsilon kn
             \ge \frac{\delta_2 n}{2}.  
        \end{align*}
        Since, in addition, $\ell \le 2kt \le 2\varepsilon k n \le  \frac{\delta_2 n/4}{8k} \le \frac{\delta_2 (n-|B'|)/2}{8k}$, 
         it follows from~\eqref{equ:2nd-hygp-L1-lower-bound} and Lemma~\ref{LEMMA:2nd-perfect-Ks1s2-bipartite} that $\lfloor \ell/k \rfloor \mathbb{C} \subset H_2$, a contradiction. 
    \end{proof}
    We divide $U$ further by letting 
    \begin{align*}
        S
        := \left\{u\in U \colon |N_{G}(u) \cap L| \le (1-\delta_3)|L|\right\} 
        \quad\text{and}\quad 
        W := U \setminus S. 
    \end{align*}
    In particular, since, by Claim~\ref{CLAIM:2nd-L1+L2-upper-bound}, $G[S] \subset G[U]$ is $(t_2+1) \mathbb{C}$-free, it follows from Lemma~\ref{LEMMA:trivial-max-degree} that 
    \begin{align}\label{equ:2nd-graph-G[S]-upper-bound}
        |G[S]|
        \le 2k t_2 |S| + \mathrm{ex}\left(|S|, \mathbb{C}\right)
        \le 2k \theta |S| + \mathrm{ex}\left(|S|, \mathbb{C}\right). 
    \end{align}

    It follows from the definition  of $S$ that 
    \begin{align}\label{equ:THM-2nd-pg-G[L,U]-upper-bound}
        |G[L,U]|
        \le |W||L| + |S|\times (1-\delta_3)|L|
        = \ell (n-\ell) - \delta_3 \ell |S|. 
    \end{align}
    Combining~\eqref{equ:THM-2nd-pg-GU-upper-bound} and~\eqref{equ:THM-2nd-pg-G[L,U]-upper-bound}, we obtain 
        \begin{align*}
            |G|
              = |G[L]| + |G[L,U]| + |G[U]| 
            & \le \binom{\ell}{2} + \ell (n-\ell) - \delta_3 \ell |S|
            + \theta n. 
        \end{align*}
    So it follows from  $|G|\ge |S(k(t+1)-1,n)| = \binom{n}{2}-\binom{n-k(t+1)+1}{2} \ge \binom{\ell}{2} + \ell (n-\ell)$ that  
        \begin{align*}
            |S|
            \le \frac{\theta n}{k(t-\theta)}
            < \frac{n}{2}. 
        \end{align*}
    which combined with Proposition~\ref{PROP:Turan-ratio} implies that  
    \begin{align*}
        \frac{\mathrm{ex}\left(n,\mathbb{C}\right)}{n}
        \ge \left(1 - \frac{|S|}{n}\right)\frac{\mathrm{ex}\left(|S|,\mathbb{C}\right)}{|S|}
        \ge \frac{1}{2}\frac{\mathrm{ex}\left(|S|,\mathbb{C}\right)}{|S|}.
    \end{align*}
    Consequently, by~\eqref{equ:2nd-C2k-t-range}, we obtain 
    \begin{align}\label{equ:2nd-graph-ratio}
        \frac{\delta_3 \ell}{2}|S| - \mathrm{ex}\left(|S|, \mathbb{C}\right)
          \ge \left(\frac{\delta_3}{2} \times k(t-\theta) - 2\frac{\mathrm{ex}\left(n, \mathbb{C}\right)}{n} \right)|S|
         \ge 0. 
    \end{align}
    In addition, 
    \begin{align}\label{equ:2nd-graph-tech-b}
        \frac{\delta_3 \ell}{2} - 2k \theta - 16k^2 \theta
        \ge \frac{\delta_3}{2}\times k(t-\theta) - 2k \theta - 16k^2 \theta
        > 0. 
    \end{align}
    Let 
    \begin{align*}
        t_3:= k(t+1)-1-\ell, \quad 
        S_1 := \left\{v\in S \colon |N_{G}(v) \cap W| \ge 16k^2 \theta \right\}, \quad\text{and}\quad
        x := |S_1|. 
    \end{align*}
    It follows from~\eqref{equ:2nd-graph-G[S]-upper-bound},~\eqref{equ:2nd-graph-ratio},~\eqref{equ:2nd-graph-tech-b} and definition of $S_1$ that 
    \begin{align*}
        |G[S]| + |G[L,S]|+ |G[S,W]| 
         & \le  2k \theta |S| + \mathrm{ex}\left(|S|, \mathbb{C}\right)
            + (1-\delta_3)\ell |S| + x \Delta + 16k^2 \theta |S| \notag \\
         & \le  \ell |S| - \left(\delta_3 \ell - 2k \theta - 16k^2 \theta\right) |S| + \mathrm{ex}\left(|S|, \mathbb{C}\right) + \frac{xn}{2} \notag \\
         & \le \ell |S| + \frac{xn}{2} - \frac{\delta_3 \ell}{2}|S| + \mathrm{ex}\left(|S|, \mathbb{C}\right) 
          \le \ell |S| + \frac{xn}{2}. 
    \end{align*}
    Consequently, 
    \begin{align}\label{equ:2nd-graph-G[L,U]-G[S]-G[S,W]}
        |G[L,U]| + |G[S]| + |G[S,W]| 
        & = |G[L,W]|+ |G[S]|+|G[L,S]|+|G[L,W]| \notag \\
        & \le \ell |W| + \ell |S| + \frac{xn}{2}
        = \ell(n-\ell) + \frac{xn}{2}. 
    \end{align}
    
    For every set $T \subset U$ let $N_T:= \bigcap_{u\in T}N_{G}(u) \cap L$ denote the set of common neighbors of $T$ in $L$. 
    It follows from the definition of $W$ that for every $(k+1)$-set $T\subset U$ we have 
    \begin{align}\label{equ:THM-2nd-pg-NT-lower-bound}
        |N_T| 
        \ge \ell - \sum_{u\in T}\left(\ell - |N_{G}(u) \cap L|\right)
        \ge \left(1-\delta_3 (k+1)\right)\ell
        = \frac{\ell}{2}. 
    \end{align} 
    
\begin{figure}[htbp]
\centering
\tikzset{every picture/.style={line width=0.85pt}} 

\begin{tikzpicture}[x=0.75pt,y=0.75pt,yscale=-1,xscale=1]

\draw [line width=1pt]  (527.22,41.47) .. controls (531.34,41.47) and (534.64,44.81) .. (534.6,48.92) -- (534.41,71.26) .. controls (534.37,75.37) and (531,78.7) .. (526.89,78.7) -- (165.91,78.45) .. controls (161.8,78.45) and (158.49,75.11) .. (158.53,71) -- (158.73,48.66) .. controls (158.77,44.54) and (162.13,41.21) .. (166.24,41.21) -- cycle ;
\draw [line width=1pt]  (515.35,93.2) .. controls (526.94,93.21) and (536.25,102.61) .. (536.15,114.2) -- (535.59,177.15) .. controls (535.48,188.73) and (526,198.12) .. (514.42,198.11) -- (182.14,197.88) .. controls (170.55,197.87) and (161.24,188.47) .. (161.35,176.89) -- (161.91,113.94) .. controls (162.01,102.35) and (171.49,92.96) .. (183.08,92.97) -- cycle ;
%
\draw    (535,177) -- (161,177) ;
\draw    (440,177) -- (440,198) ;

\draw    (180,60) -- (180,130) ;
\draw    (200,60) -- (200,130) ;
\draw    (220,60) -- (220,130) ;
\draw    (180,60) -- (220,130) ;
\draw    (200,60) -- (180,130) ;
\draw    (220,60) -- (200,130) ;
\draw [fill=uuuuuu]   (180,60) circle (1.5pt);
\draw [fill=uuuuuu]   (180,130)  circle (1.5pt);
\draw [fill=uuuuuu]   (200,60) circle (1.5pt);
\draw [fill=uuuuuu]   (200,130)  circle (1.5pt);
\draw [fill=uuuuuu]   (220,60)  circle (1.5pt);
\draw [fill=uuuuuu]   (220,130)  circle (1.5pt);
%
\draw    (250,60) -- (250,130) ;
\draw    (270,60) -- (250,130) ;
\draw    (270,60) -- (270,130) ;
\draw    (290,60) -- (270,130) ;
\draw    (290,60) -- (290,130) ;
\draw    (250,60) -- (290,130) ;
\draw [fill=uuuuuu]   (250,60) circle (1.5pt);
\draw [fill=uuuuuu]   (250,130)  circle (1.5pt);
\draw [fill=uuuuuu]   (270,60) circle (1.5pt);
\draw [fill=uuuuuu]   (270,130)  circle (1.5pt);
\draw [fill=uuuuuu]   (290,60)  circle (1.5pt);
\draw [fill=uuuuuu]   (290,130)  circle (1.5pt);
%
\draw    (315,130) -- (340,130) ;
\draw    (355,130) -- (380,130) ;
\draw    (315,130) -- (335,60) ;
\draw    (335,60) -- (355,130) ;
\draw    (340,130) -- (360,60) ;
\draw    (360,60) -- (380,130) ;
\draw [fill=uuuuuu]   (315,130) circle (1.5pt);
\draw [fill=uuuuuu]   (340,130)  circle (1.5pt);
\draw [fill=uuuuuu]   (355,130) circle (1.5pt);
\draw [fill=uuuuuu]   (380,130)  circle (1.5pt);
\draw [fill=uuuuuu]   (335,60)  circle (1.5pt);
\draw [fill=uuuuuu]   (360,60)  circle (1.5pt);
%
\draw   (420,160)  -- (400,130) ;
\draw    (420,160) -- (440,130) ;
\draw    (410,60) -- (400,130) ;
\draw    (410,60) -- (420,130) ;
\draw    (430,60) -- (420,130) ;
\draw    (430,60) -- (440,130) ;
\draw [fill=uuuuuu]   (420,160) circle (1.5pt);
\draw [fill=uuuuuu]   (400,130)  circle (1.5pt);
\draw [fill=uuuuuu]   (440,130) circle (1.5pt);
\draw [fill=uuuuuu]   (420,130)  circle (1.5pt);
\draw [fill=uuuuuu]   (410,60)  circle (1.5pt);
\draw [fill=uuuuuu]   (430,60)  circle (1.5pt);
%
\draw    (490,190) -- (470,130) ;
\draw    (490,190) -- (510,130) ;
\draw    (480,60) -- (490,130) ;
\draw    (480,60) -- (470,130) ;
\draw    (500,60) -- (490,130) ;
\draw    (500,60) -- (510,130) ;
\draw [fill=uuuuuu]   (490,190) circle (1.5pt);
\draw [fill=uuuuuu]   (470,130)  circle (1.5pt);
\draw [fill=uuuuuu]   (510,130) circle (1.5pt);
\draw [fill=uuuuuu]   (490,130)  circle (1.5pt);
\draw [fill=uuuuuu]   (500,60)  circle (1.5pt);
\draw [fill=uuuuuu]   (480,60)  circle (1.5pt);
%
\draw (135,55) node [anchor=north west][inner sep=0.75pt]   [align=left] {$L$};
\draw (135,130) node [anchor=north west][inner sep=0.75pt]   [align=left] {$W$};
\draw (135,180) node [anchor=north west][inner sep=0.75pt]   [align=left] {$S$};
\draw (478,205) node [anchor=north west][inner sep=0.75pt]   [align=left] {$S_1$};
\end{tikzpicture}
\caption{Supplementary diagram for Claim~\ref{CLAIM:2nd-graph-S1-G[W]} when $k=3$.}
\label{fig:2nd-graph}
\end{figure}
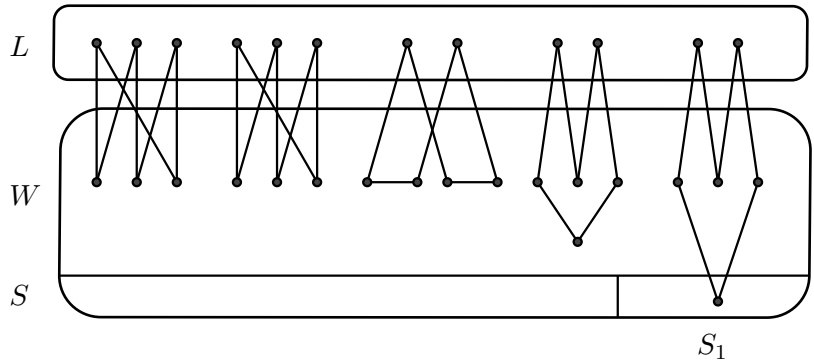

    Let $\mathbb{C}':= \mathbb{C}[k+1]$ and note that every $\mathbb{C}'$-free graph with at least $k+1$ vertices is also $\{K_{1,2},M_2\}$-free (see Figure~\ref{fig:C2k-[k+1]}). 
    \begin{claim}\label{CLAIM:2nd-graph-S1-G[W]}
        We have $x \le t_3$ and $G[W]$ is $(t_3-x+1)\cdot \{K_{1,2}, M_{2}\}$-free. 
        Moreover,  
        \begin{align}\label{equ:2nd-graph-G[W]-upper-bound-1}
            |G[W]|
            \le \frac{t_3-x}{2}n + 1. 
        \end{align}
    \end{claim}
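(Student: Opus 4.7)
The strategy is to prove all three assertions by a single contradiction: assume either $x\ge t_3+1$, or there is a packing $\mathcal{F}=\{F_1,\ldots,F_\tau\}$ of $\tau\ge t_3-x+1$ pairwise vertex-disjoint copies of $K_{1,2}$ or $M_2$ in $G[W]$, and set $s:=x+\tau\ge t_3+1$. I will exhibit $t+1$ pairwise vertex-disjoint copies of $\mathbb{C}=C_{2k}$ in $G$, contradicting its $(t+1)\mathbb{C}$-freeness.

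The construction has three families of cycles. For each $F_i\in\mathcal{F}$ I append $k-2$ or $k-3$ further $W$-vertices (so that $F_i$ together with these vertices is isomorphic to one of the $(k+1)$-vertex members $K_{1,2}\cup(k-2)P_1$ or $M_2\cup(k-3)P_1$ of $\mathbb{C}^{\mathrm{ind}}[k+1]$ noted before the claim) to obtain a $(k+1)$-vertex subgraph $\widehat F_i\subseteq G[W]$; I then extend $\widehat F_i$ to a copy of $C_{2k}$ by embedding the $k-1$ "missing" independent-set vertices of $C_{2k}$ into fresh $L$-vertices. Each missing vertex must be realised by an $L$-vertex adjacent to a specified pair of $W$-vertices in $\widehat F_i$, and by \eqref{equ:THM-2nd-pg-NT-lower-bound} any such pair has at least $\ell/2$ common $L$-neighbours, so the $k-1$ $L$-vertices can be chosen greedily. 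Cycles from $S_1$ are handled analogously: for each $v\in S_1$, pick two of its $\ge 16k^2\theta$ $W$-neighbours $w,w'$ together with $k-2$ further $W$-vertices, and place $v$ at the centre of the $K_{1,2}\cup(k-2)P_1$. A key point is that both $C_{2k}$-neighbours of the centre lie inside the $(k+1)$-set, so the embedding never asks an $L$-vertex to be adjacent to $v$; this works even when $|N_G(v)\cap L|$ is small.

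After these $s$ extensions at most $(k-1)s$ vertices of $L$ are consumed. Applying Lemma~\ref{LEMMA:2nd-perfect-Ks1s2-bipartite} with $s_1=s_2=k$ and $\alpha=2k\delta_1$ to the bipartite graph between the remaining $\ell-(k-1)s$ vertices of $L$ and an untouched portion of $V\setminus L$ yields $\lfloor(\ell-(k-1)s)/k\rfloor$ further pairwise vertex-disjoint copies of $K_{k,k}\supseteq C_{2k}$; the "moreover" hypothesis is verified exactly as in Claims~\ref{CLAIM:2nd-H1-upper-bound} and~\ref{CLAIM:2nd-L1+L2-upper-bound}, using the lower bound $\ell_1\ge(k-1)(t+1)$ from~\eqref{equ:2nd-hygp-L1-lower-bound}. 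Substituting $\ell=k(t+1)-1-t_3$ and $s=t_3+1$ gives
\[
s+\left\lfloor\frac{\ell-(k-1)s}{k}\right\rfloor=(t_3+1)+(t-t_3)=t+1,
\]
and a one-line check shows this total is non-decreasing in $s$, so at least $t+1$ pairwise vertex-disjoint copies of $\mathbb{C}$ exist in $G$, contradicting the $(t+1)\mathbb{C}$-freeness.

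For the "Moreover" part: knowing $G[W]$ is $(t_3-x+1)\cdot\{K_{1,2},M_2\}$-free, take a maximum packing $\mathcal{P}$ of pairwise vertex-disjoint 2-edge subgraphs in $G[W]$. Then $|\mathcal{P}|\le t_3-x$, its vertex set $C$ has $|C|\le 4(t_3-x)$, and $G[W\setminus C]$ contains at most one edge by maximality. Every $w\in W\subseteq V\setminus L_2$ satisfies $d_{G[W]}(w)\le d_G(w)<\delta_2 n=n/(4k)$, so
\[
|G[W]|\le |C|\cdot\frac{n}{4k}+1\le\frac{(t_3-x)n}{k}+1\le\frac{t_3-x}{2}n+1.
\]

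The main obstacle is the use of the $\mathbb{C}^{\mathrm{ind}}[k+1]$-structure to guarantee that each extension needs exactly $k-1$ fresh $L$-vertices, so that the $L$-budget is tight and exactly matches the cycle count needed to reach $t+1$; with fewer $L$-vertices per extension one would need to construct cycles largely inside $G[U]$, where the available edge count is too small, and with more $L$-vertices per extension the budget would run out before $t+1$ cycles are produced.
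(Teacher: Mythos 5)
Your proposal is correct and follows essentially the same contradiction strategy as the paper: build $t_3+1$ short cycles that together consume only $(k-1)(t_3+1)$ vertices of $L$, then apply Lemma~\ref{LEMMA:2nd-perfect-Ks1s2-bipartite} to the remaining bipartite graph between $L$ and $V\setminus L$ to reach $t+1$ cycles, with the same $L$-budget arithmetic. One small difference: for each $v\in S_1$ the paper builds a $K_{k,k}$ directly (putting $v$ together with a $(k-1)$-set $Y_i\subset N_{X_i}$ on one side and $X_i\subset N_G(v)\cap W$ on the other), whereas you place $v$ at the centre of a $K_{1,2}\cup(k-2)K_1\in\mathbb{C}^{\mathrm{ind}}[k+1]$; both constructions exploit the same key point that the $L$-vertices used in the extension need never be adjacent to $v$, so $v$'s possibly small $L$-degree is harmless, and your version has the advantage of treating the $S_1$ cycles and the $\{K_{1,2},M_2\}$-packing cycles uniformly, thereby spelling out the part the paper only declares ``similar''. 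Two cosmetic caveats: \eqref{equ:THM-2nd-pg-NT-lower-bound} is stated for $(k+1)$-sets, so you should invoke it for $T=\widehat F_i$ (or $\widehat F_i\setminus\{v\}$) rather than for each pair separately; and the constant $\alpha=2k\delta_1$ you feed into Lemma~\ref{LEMMA:2nd-perfect-Ks1s2-bipartite} is a factor too large for the degree-threshold condition once you discount the removed vertices --- the paper's choice $\alpha=\delta_2/2$ (which yields threshold $1-2\delta_1$ rather than $1-\delta_1$) is what actually verifies cleanly. Your direct max-degree computation for the ``Moreover'' bound is fine and is essentially Lemma~\ref{LEMMA:trivial-max-degree} unwound by hand.
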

    \begin{proof}
        First, suppose to the contrary that there exists a $(t_3+1)$-set $\{v_1, \ldots, v_{t_3+1}\} \subset S_1$ such that $|N_{G}(v_i) \cap W| \ge 16k^2 \theta \ge  4k (t_3+1)$ for all $i\in [t_3+1]$. 
        Choose an $k$-set $X_i$ from $N_{G}(v_i) \cap W$ for each $i\in [t_2+1]$ such that $\{X_1, \ldots, X_{t_3+1}\}$ are pairwise disjoint (the definition of $S$ ensures that this is possible). 
        For each $N_i$ choose an $(k-1)$-set $Y_i \subset N_{X_i}$ such that $\{Y_1, \ldots, Y_{t_3+1}\}$ are pairwise disjoint (the existence of such sets are guaranteed by~\eqref{equ:THM-2nd-pg-NT-lower-bound}). 
        Notice that $\mathbb{C} \subset K_{k, k} \subset G[\{v_i\} \cup X_i \cup Y_i]$ for $i\in [t_3+1]$. 
        
        Let $Z:= \bigcup_{i\in [t_3+1]}\left(\{v_i\} \cup X_i \cup Y_i\right)$ and $G'$ be the induced subgraph of $G$ on $V\setminus Z$.
        For every $v\in L_1 \setminus Z$ we have 
        \begin{align*}
            d_{G'}(v) 
            \ge d_{G}(v) - |Z|
            \ge (1-\delta_1)n - 2k(t_3+1)
            \ge (1-2\delta_1)n, 
        \end{align*}
        and for every $u\in L_2 \setminus Z$ we have 
        \begin{align*}
            d_{G'}(v) 
            \ge d_{G}(v) - |Z|
            \ge \delta_2 n - 2k(t_3+1)
            \ge \frac{\delta_2}{2}n.  
        \end{align*}
        Since, in addition, $\ell \le k(t+1) \le 2\varepsilon k n \le \frac{\delta_2 (n-\ell-|Z|)/2}{8k}$, 
        it follows from Lemma~\ref{LEMMA:2nd-perfect-Ks1s2-bipartite} that $G'$ contains at least 
        \begin{align*}
            \left\lfloor \frac{\ell- (k-1)t_3}{k} \right\rfloor
            = \ell - (k-1)(t+1)
        \end{align*}
        pairwise vertex-disjoint copies of $\mathbb{C}$. 
        This implies that $G$ contains 
        \begin{align*}
            t_3+1 + \ell - (k-1)(t+1) 
            = k(t+1)-1-\ell + 1 + \ell - (k-1)(t+1)
            = t+1
        \end{align*}
        pairwise vertex-disjoint copies of $\mathbb{C}$, a contradiction. 
        This proves that $x \le t_3$. 
        The proof for the $(t_3-x+1) \cdot \{K_{1,2}, M_{2}\}$-freeness of $G[W]$ is similar (see Figures~\ref{fig:C2k-[k+1]} and~\ref{fig:2nd-graph}), so we omit the details. 

        Recall that $\Delta(G[W]) \le \Delta(G[U]) \le \delta_3 n$, so it follows from Lemma~\ref{LEMMA:trivial-max-degree} and $(t_3-x+1)\cdot \{K_{1,2}, M_{2}\}$-freeness of $G[W]$ that 
        \begin{align*}
            |G[W]|
            \le (t_3-x)(k+1)\times \delta_3 n + \mathrm{ex}(n,\{K_{1,2},M_2\})
            \le \frac{t_3-x}{2}n+ 1,
        \end{align*}
        completing the proof of Claim~\ref{CLAIM:2nd-graph-S1-G[W]}.
    \end{proof}
    It follows from~\eqref{equ:2nd-graph-G[L,U]-G[S]-G[S,W]},~\eqref{equ:2nd-graph-G[W]-upper-bound-1}, and Fact~\ref{FACT:binom-inequality-b} that 
    \begin{align*}
        |G|
        & = |G[L]| + |G[L,U]| + |G[S]| + |G[S,W]| + |G[W]| \notag \\
        & \le \binom{\ell}{2}  + \ell(n-\ell) + \frac{xn}{2} + \frac{t_3-x}{2}n + 1 \notag \\
        & \le \binom{n}{2}-\binom{n-k(t+1)+1}{2} - t_3 \times \frac{2n}{3}+ \frac{t_3}{2}n + 1 \notag \\
        & \le \binom{n}{2}-\binom{n-k(t+1)+1}{2} + 1, 
    \end{align*}
    completing the proof of Theorem~\ref{THM:2nd-C2k-main}. 
\end{proof}
\bibliographystyle{abbrv}

\end{document}